\definecolor{halfgray}{gray}{0.55} 
\definecolor{webgreen}{rgb}{0,0.5,0}
\definecolor{webbrown}{rgb}{.6,0,0} \hypersetup{%
\theoremstyle{plain}
\newtheorem{theorem}{Theorem}[section]
\newtheorem{lemma}[theorem]{Lemma}
\newtheorem{corollary}[theorem]{Corollary}
\theoremstyle{definition}
\newtheorem{remark}[theorem]{Remark}
\DeclareMathOperator{\Ima}{Im}
\DeclareMathOperator{\Id}{Id}
\def\R{\mathbb{R}}
\def\N{\mathbb{N}}
\def\B{\mathcal{B}}
\newcommand{\norm}[1]{{\left\lVert \, #1 \, \right\rVert}}
\begin{document}

\title
[On the spectral radius of compact operator cocycles]
{On the spectral radius of compact operator cocycles}

\author{Lucas Backes}

\address{Departamento de Matem\'atica, Universidade Federal do Rio Grande do Sul, Av. Bento Gon\c{c}alves 9500, CEP 91509-900, Porto Alegre, RS, Brazil.}
\email{lucas.backes@ufrgs.br}

\author{Davor Dragi\v cevi\'c}
\address{Department of Mathematics, University of Rijeka, 51000 Rijeka, Croatia}
\email{ddragicevic@math.uniri.hr}

\date{\today}

\keywords{Compact operator cocycles, generalized spectral radius, Berger-Wang's formula}
\subjclass[2010]{Primary: 37H15, 37A20; Secondary: 15A60, 37D25}

\begin{abstract}
We extend the notions of joint and generalized spectral radii to cocycles acting on Banach spaces and obtain a version of Berger-Wang's formula when restricted to the space of cocycles taking values in the space of compact operators. Moreover, we observe that the previous quantities depends continuously on the underlying cocycle.
\end{abstract}

\maketitle

\section{Introduction}
Let $M_d$ be the space of all real $d\times d$ matrices and consider a compact set $\mathcal M\subset M_d$. Then, we can define  the \emph{joint spectral radius}  of $\mathcal M$ by
\[
\hat{\rho}(\mathcal M)=\lim_{n\to \infty} \sup \{\lVert A_n \cdots A_1 \rVert^{1/n}:  A_i\in \mathcal M\}.
\]
This notion was introduced by Rota and Strang in their seminal paper~\cite{RS} and since then it has found its applications in several different fields like coding theory~\cite{MOS} and stability theory~\cite{Dai12}.
 Furthermore, one can define a  \emph{generalized spectral radius} of $\mathcal M$  by
\[
\bar{\rho}(\mathcal M)=\limsup_{n\to \infty} \sup\{ \rho (A_n\cdots A_1)^{1/n}: A_i \in \mathcal M \},
\]
 where $\rho(A)$ denotes the usual spectral radius of $A\in M_d$.  

The celebrated result of Berger and Wang~\cite{BW} (usually called the  Berger-Wang formula) asserts that those two quantities  coincide, i.e. $\hat{\rho}(\mathcal M)=\bar{\rho}(\mathcal M)$. Moreover, this equality holds even when  $\mathcal M$ is just a  bounded subset of $M_d$.
In addition, several results concerned with the regularity of the map $\mathcal M\mapsto \hat{\rho}(\mathcal M)$ (acting on the space of compact subsets of $M_d$) were obtained. Indeed, Wirth~\cite{W} proved that this map is continuous and also established its local Lipschitz continuity on the space of irreducible compact sets $\mathcal M \subset M_d$
(explicit Lipschitz constant was given in~\cite{K}). 

It was  natural to ask  whether these results can be extended to the infinite-dimensional setting, where $\mathcal M$ is  a compact subset of the space of all bounded operators acting on some Banach space $\B$.   It turns out that in this setting, the version of the Berger-Wang formula doesn't hold in general. Indeed, Gurvits~\cite{Gu95} provided  an explicit counterexample
(with $\mathcal M$ consisting of only two operators).  However, some partial extensions of the Berger-Wang formula were obtained in~\cite{ST00, ST02} with the best result to this date being that of Morris~\cite{IM}.

It turns out that the previously mentioned results can be formulated in the context of ergodic theory. Indeed, it is possible to associate to $\mathcal M$, the so-called \emph{linear cocycle} (of matrices or operators) which acts over a full two-sided shift $(M, f)$
and to give an alternative formulation of the Berger-Wang formula and related results. We refer to Remark~\ref{918} for a detailed discussion.  This observation also opened possibilities of
using tools from ergodic theory to study the notions of joint and generalized spectral radius from the point of view of dynamical systems. In this direction,  Dai~\cite{Dai14} obtained the version of the  Berger-Wang formula for linear cocycles of matrices acting over subshifts $(M, f)$ of finite-type. More recently, Zou, Cao and Liao~\cite{ZCL18} extended the result of Dai
by proving that the same conclusion holds whenever $(M, f)$ is a topological dynamical system satisfying the so-called Anosov Closing property. Moreover, they showed that the joint spectral radius is a continuous function on the space of H\"{o}lder continuous cocycles. 

The main objective of the present paper is to extend the results in~\cite{ZCL18} to the case of linear cocycles with values in the space of compact operators acting on arbitrary Banach spaces. This is achived by carefully combining various results in the literature. In particular, our recent 
results dealing with the approximation of Lyapunov exponents in the infinite-dimensional setting~\cite{BD} play a central role.

\section{Preliminaries}\label{sec: statements}

Throughout this paper $(M,d)$  will be a compact metric space and $f: M \to M$ will be a homeomorphism that satisfies the \textit{Anosov Closing property}. We recall that the latter  means that there exist $C_1 ,\varepsilon _0 ,\theta >0$ such that if $z\in M$ satisfies $d(f^n(z),z)<\varepsilon _0$ then there exists a periodic point $p\in M$ such that $f^n(p)=p$ and
\begin{displaymath}
d(f^j(z),f^j(p))\leq C_1 e^{-\theta \min\lbrace j, n-j\rbrace}d(f^n(z),z),
\end{displaymath}
for every $j=0,1,\ldots ,n$. We note that shifts of finite type, basic pieces of Axiom A diffeomorphisms and more generally, hyperbolic homeomorphisms are particular examples of maps satisfying the Anosov Closing property. We refer to~\cite[Corollary 6.4.17.]{KH95} for details.

\subsection{Semi-invertible operator cocycles}

Let $(\B,\norm{\cdot})$ be a Banach space and let $B(\B,\B)$ be the space of all bounded linear maps from $\B$ to itself. Denote by $B_0(\B,\B)$ the subset of $B(\B,\B)$ formed by the compact operators of $\B$. We recall that $B(\B,\B)$  is  a Banach space with respect to the norm
\[
 \norm{T} =\sup \lbrace \norm{Tv}/\norm{v};\; \norm{v}\neq 0 \rbrace, \quad T\in B(\B,\B)
\]
and $B_0(\B,\B)$ is a closed subspace of $(B(\B,\B),\|\cdot\|)$. Although we use the same notation for the norms on $\B$ and $B(\B,\B)$ this will not cause any confusion. Finally,  consider a map  $A:M\to B(\B,\B)$.

The \emph{semi-invertible operator cocycle} (or just \textit{cocycle} for short) generated by $A$ over $f$ is  defined as the map $A:\mathbb{N}\times M\to B(\B,\B)$ given by
\begin{equation*}\label{def:cocycles}
A^n(x):=A(n, x)=
\left\{
	\begin{array}{ll}
		A(f^{n-1}(x))\ldots A(f(x))A(x)  & \mbox{if } n>0 \\
		\Id & \mbox{if } n=0,\\
	\end{array}
\right.
\end{equation*}
for all $x\in M$. 
The term `semi-invertible' refers to the fact that the action of the underlying dynamical system $f$ is assumed to be an invertible transformation while the action on the fibers given by $A$ may fail to be invertible. 

We say that the cocycle generated by $A$ over $f$ is \emph{compact} if $A$ take values in $B_0(\B,\B)$, i.e. if $A(x)\in B_0(\B, \B)$ for each $x\in M$.

\subsection{Volume growth} Let $T\in B(\B,\B)$. For a subspace $V\subset \B$, set
$$m(T_{\mid V}):=\inf \{ \|Tv\|; v\in V \text{ with } \|v\|=1 \}.$$
Furthermore, for each $k\in \N$ such that $k\le d:=\dim \mathcal B$, set
\[
F_k(T):=\sup \{ m(T_{\mid V}); \; V\subset \B \text{ is a $k$-dimensional subspace}\},
\]
\[
c_k(T):=\inf \{\|T_{\mid V}\|; \; V\subset \B \text{ is a $(k-1)$-codimensional subspace} \},
\]
and
\[
V_k(T)=\sup \left\lbrace \text{det}(T\mid_V); V\subset \B \text{ is a $k$-dimensional subspace} \right\rbrace,
\]
where 
\begin{equation*}
\text{det}(T\mid_V)=
\left\{
	\begin{array}{ll}
		\frac{m_{TV}(T(B_V))}{m_V(B_V)}  & \mbox{if } T\mid_V \text{ is injective, } \\
		0 & \mbox{otherwise},\\
	\end{array}
\right.
\end{equation*}
and $m_V$ denotes the \emph{Haar measure} on the subspace $V$ normalized so that the unit ball $B_V$ in $V$ has measure given by the volume of the Euclidean unit ball in $\R^k$.  
We recall that quantities $F_k(T)$ are called \emph{Kolmogorov numbers} of $T$, while  quantities $c_k(T)$ are called \emph{Gelfand numbers} of $T$.

We note  that $V_k(T)$, $\prod_{j=1}^k F_j(T)$ and $\prod_{j=1}^k c_j(T)$ may be interpreted as  the growth rates of $k$-dimensional volumes spanned by $\{Tv_i\}_{i=1}^k$, where $v_i\in \B$ are unit vectors. Below we present a result relating all the previous notions of volume growth. In fact, this result says that  up to a  multiplicative constant, all of them  coincide.

\begin{lemma}\label{lemma: relation volume growth}
Given $k\in \N$ such that $k\le \dim \mathcal B$, there exists $C>0$ (depending only on $k$) such that for every $T\in B(\B,\B)$,
$$\frac{1}{C}F_k(T)\leq c_k(T)\leq C F_k(T),$$
$$\frac{1}{C}V_k(T)\leq \prod_{j=1}^k F_j(T)\leq C V_k(T)$$
and 
$$\frac{1}{C}V_k(T)\leq \prod_{j=1}^k c_j(T)\leq C V_k(T).$$
\end{lemma}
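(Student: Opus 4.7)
The plan is to establish each pair of inequalities separately, using Auerbach's lemma and (for the volume comparisons) John's theorem on the extremal ellipsoid to transfer estimates between the Banach and Hilbert settings.

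For the comparison $F_k(T)\asymp c_k(T)$, one direction is free of constants: given any $k$-dimensional $V$ and any $(k-1)$-codimensional $W$, the dimension formula forces $\dim(V\cap W)\ge 1$, so a unit vector $v\in V\cap W$ satisfies $m(T|_V)\le\|Tv\|\le\|T|_W\|$; taking suprema in $V$ and infima in $W$ yields $F_k(T)\le c_k(T)$. For the reverse inequality, I would fix a $k$-dimensional subspace $V$ nearly realizing $F_k(T)$ and invoke Auerbach's lemma to obtain a biorthogonal system $(v_i,\phi_i)_{i=1}^{k}$ in $V\times V^*$ with $\|v_i\|=\|\phi_i\|=1$ (extending each $\phi_i$ to $\B$ by Hahn--Banach without increasing its norm). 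Dropping the index $i_0$ for which $\|Tv_{i_0}\|$ is smallest and setting $W:=\bigcap_{i\ne i_0}\ker\phi_i$ produces a $(k-1)$-codimensional subspace of $\B$; any unit $w\in W$ decomposes through the Auerbach projection $Px=\sum_i\phi_i(x)v_i$ (of norm at most $k$) as $w=\phi_{i_0}(w)v_{i_0}+(w-Pw)$, so that $\|Tw\|$ is bounded by a $k$-dependent multiple of $\|Tv_{i_0}\|+F_k(T)$. A short estimate of $\|Tv_{i_0}\|$ in terms of $m(T|_V)$ and the Auerbach structure then closes the bound $c_k(T)\le C(k)F_k(T)$.

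For the volume-growth comparisons, I would argue by reduction to finite-dimensional Hilbert geometry. Given a $k$-dimensional $V\subset\B$, John's theorem provides an ellipsoid $\mathcal E_V\subset B_V\subset\sqrt{k}\,\mathcal E_V$; this identifies $(V,\|\cdot\|)$ with $(\R^k,\|\cdot\|_2)$ up to Banach--Mazur distortion $\sqrt k$, and the same applies to $TV$. Consequently $\det(T|_V)$, defined here through a ratio of Haar measures normalized to the \emph{Euclidean} unit-ball volume, agrees up to a $k$-dependent multiplicative constant with the classical determinant of $T|_V\colon(V,\langle\cdot,\cdot\rangle_V)\to(TV,\langle\cdot,\cdot\rangle_{TV})$ taken with respect to the John inner products; this classical determinant equals the product of the singular values of $T|_V$. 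Those singular values, being the extremal values of $m$ on nested subspaces of $V$, are controlled from both sides by $F_j(T)$ (and symmetrically by $c_j(T)$) for $1\le j\le k$ through the variational definitions given above. Supremizing over $V$ and using the first comparison then produces the remaining two equivalences.

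The step I expect to be most delicate is the bookkeeping in the volume comparison: the definition of $\det(T|_V)$ involves Haar measures on two different $k$-dimensional spaces ($V$ and $TV$), each independently normalized through a Euclidean unit ball rather than the norm unit ball, and one must check that the John's-ellipsoid distortion factors entering numerator and denominator combine into a constant depending only on $k$, not on $V$ or $T$. Once this is verified, the rest is routine $s$-number theory of the sort systematized in Pietsch's monograph, where analogous inequalities are proved with explicit $k$-dependent constants.
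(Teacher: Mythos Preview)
The paper does not prove this lemma from scratch: it defers the first estimate to \cite[Lemma~15]{BM}, the second to the proof of \cite[Lemma~A.2]{DFGTV}, and derives the third from the first two. Your proposal instead attempts a direct argument, and the John's-ellipsoid reduction you sketch for the volume comparisons is indeed along the lines of what is done in the cited references.

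Your argument for $c_k(T)\le C(k)\,F_k(T)$, however, has a genuine gap. Having fixed a $k$-dimensional $V$ with $m(T|_V)$ near $F_k(T)$ and an Auerbach system $(v_i,\phi_i)$, you set $W=\bigcap_{i\ne i_0}\ker\phi_i$ and decompose a unit $w\in W$ as $w=\phi_{i_0}(w)v_{i_0}+(w-Pw)$. The first summand is controlled as you say, but $w-Pw$ lies in $\bigcap_i\ker\phi_i$, a complement of $V$ on which no hypothesis whatsoever has been placed on $T$; there is no reason for $\lVert T(w-Pw)\rVert$ to be bounded by anything like $F_k(T)$. Concretely, take $\B=\ell^2$, $k=2$, and $T=\mathrm{diag}(1,1,N,0,0,\dots)$ with $N$ large. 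Then $F_2(T)=1$, attained for instance at $V=\mathrm{span}(e_1,e_2)$. The Auerbach basis is $\{e_1,e_2\}$, and whichever index you drop, the resulting $W$ contains $e_3$, whence $\lVert T|_W\rVert\ge N$; your claimed bound $\lVert T|_W\rVert\le C(2)\cdot 1$ fails once $N>C(2)$. The underlying issue is that a near-optimal $V$ for $F_k$ carries no information about the behaviour of $T$ off $V$, which is precisely what $\lVert T|_W\rVert$ detects. A correct construction must instead choose the functionals cutting out $W$ from directions where $T$ is \emph{large}; this is the route taken in \cite{BM}, and your Auerbach construction on an arbitrary near-optimal $V$ cannot be repaired without incorporating such information.
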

\begin{proof}
The first estimate is proved in~\cite[Lemma 15.]{BM}, while the second  is established in the proof of~\cite[Lemma A.2]{DFGTV}.  Finally, the last assertion of the lemma is an easy consequence of the first two. 
\end{proof}

We shall also need the following auxiliary  result. 
\begin{lemma}\label{lemma: subadditive}
For every $k\in \N$, the map $T\to V_k(T)$ is continuous  on $B(\B, \B)$. Moreover, it is  also 
submultiplicative, i.e. 
\[
V_k(TS)\le V_k(T)V_k(S) \quad \text{for every $T, S\in B(\B, \B)$.}
\]
\end{lemma}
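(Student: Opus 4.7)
The plan is to prove the two assertions independently. Submultiplicativity will follow immediately from the chain rule for the Jacobian of a composition of linear maps between finite-dimensional normed spaces; continuity requires more work and will be obtained by combining pointwise continuity of $T \mapsto \det(T|_W)$ (for each fixed $W$) with a uniform Lipschitz-type bound restricted to the \emph{near-maximizing} subspaces.

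For submultiplicativity, fix a $k$-dimensional subspace $W \subset \B$. If $S|_W$ or $T|_{SW}$ fails to be injective, then $(TS)|_W$ is not injective either and $\det((TS)|_W) = 0 \le V_k(T) V_k(S)$. Otherwise, $SW$ is a $k$-dimensional subspace, and since the Jacobian of any linear bijection between two $k$-dimensional normed spaces is a constant factor independent of the measurable set on which it is computed (being a ratio of Haar measures), one obtains the chain rule
\[
\det((TS)|_W) \;=\; \det(T|_{SW})\,\det(S|_W) \;\le\; V_k(T)\,V_k(S).
\]
Taking the supremum over $W$ yields $V_k(TS) \le V_k(T) V_k(S)$.

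For continuity, the first step is to verify that for any fixed $k$-dimensional subspace $W$ with basis $v_1, \ldots, v_k$, the map $T \mapsto \det(T|_W)$ is continuous on $B(\B, \B)$. With $D_T := \{t \in \R^k : \|\sum_i t_i T v_i\| \le 1\}$ and $D_I$ the analogous body for the identity, a direct computation of the two Haar measures shows that $\det(T|_W) = \mathrm{Leb}(D_I)/\mathrm{Leb}(D_T)$, with the quotient interpreted as $0$ precisely when $D_T$ is unbounded (equivalently, when $T|_W$ fails to be injective). The body $D_T$ varies continuously in $T$ in the Hausdorff metric on compacts, so $\mathrm{Leb}(D_T)$ varies continuously, whence so does $\det(T|_W)$. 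This gives lower semi-continuity of $V_k$ as a supremum of continuous functions.

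The remaining obstacle---and the real heart of the argument---is upper semi-continuity. The key observation is that a subspace $W$ with $\det(T|_W)$ close to $V_k(T)$ must be non-degenerate in the quantitative sense that the smallest ``singular value'' of $T|_W$ is bounded below: indeed, by Lemma \ref{lemma: relation volume growth} and standard bounds relating the Kolmogorov numbers $F_j$ to volume growth, a very small smallest singular value would force $\det(T|_W) \ll V_k(T)$. On such near-maximizing subspaces, the body $D_T$ lies in a uniformly bounded region of $\R^k$, which in turn makes the Lipschitz constant of $T \mapsto \mathrm{Leb}(D_T)$ (and hence of $T \mapsto \det(T|_W)$) uniform in $W$. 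Taking the supremum then yields an estimate of the form $|V_k(T+E) - V_k(T)| \le L\|E\|$ with $L$ depending only on $k$ and $\max(\|T\|, \|T+E\|)$, which gives the claimed continuity (in fact, local Lipschitz continuity on bounded sets).
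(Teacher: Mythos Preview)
Your proof is essentially correct, but it follows a different path from the paper. The paper does not argue either claim: it simply cites \cite[Lemma 2.20]{AB} for continuity of $T\mapsto V_k(T)$ and the proof of \cite[Lemma A.2]{DFGTV} for submultiplicativity. You instead supply self-contained arguments.

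Your submultiplicativity proof via the chain rule for Haar-measure Jacobians is the standard one and is exactly what the cited reference does. For continuity, the representation $\det(T|_W)=\mathrm{Leb}(D_I)/\mathrm{Leb}(D_T)$ is correct and gives pointwise continuity in $T$ for each fixed $W$, hence lower semi-continuity of $V_k$. Your upper semi-continuity argument has the right mechanism, but as written it leans on a few points that deserve a line of justification. First, the bound ``small smallest singular value forces $\det(T|_W)$ small'' is really the inequality $\det(T|_W)\le C_k\,\|T\|^{k-1}\,m(T|_W)$; this follows from the second estimate of Lemma~\ref{lemma: relation volume growth} applied to the operator $T|_W:W\to\B$ (the proof of that lemma works for operators between different spaces, not only endomorphisms), together with $F_j(T|_W)\le\|T\|$ for $j<k$ and $F_k(T|_W)=m(T|_W)$. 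Second, the uniform Lipschitz estimate is cleanest stated multiplicatively: from $m((T+E)|_W)\ge c$ one gets the basis-independent inclusion $D_{T+E}\subset (1+\|E\|/c)\,D_T$, hence $\det(T|_W)\ge (1+\|E\|/c)^{-k}\det((T+E)|_W)$; plugging in $c\gtrsim V_k(T+E)/\|T+E\|^{k-1}$ cancels the dependence on $V_k$ and yields your claimed bound $|V_k(T+E)-V_k(T)|\le L\|E\|$ with $L$ depending only on $k$ and $\max(\|T\|,\|T+E\|)$.

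In short: both approaches are valid. The paper's route is shorter (two citations); yours is self-contained and in fact gives more, namely local Lipschitz continuity on bounded sets rather than mere continuity.
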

\begin{proof}
The continuity of the map $T\to V_k(T)$ is established in~\cite[Lemma 2.20]{AB}, while the submultiplicativity property was observed in the proof of~\cite[Lemma A.2]{DFGTV}.
\end{proof}

\subsection{Multiplicative ergodic theorem}
We now recall the version of the multiplicative ergodic theorem established in~\cite{FLQ13} (see also~\cite{AB, GTQ}).  We stress that we don't state it in full generality. Indeed, we present a simplified version  that will be sufficient for our purposes. 
\begin{theorem}
Assume that $A$ is a continuous and compact cocycle over $f$. Furthermore, let $\mu$ be an $f$-invariant ergodic probability measure on $M$. Then, there exists a Borel set $\mathcal R^\mu \subset M$ such that $\mu(\mathcal R^\mu)=1$ and either:
\begin{enumerate}
\item there is a finite sequence of numbers 
\[
\lambda_1(A,\mu) >\lambda_2(A,\mu)>\cdots >\lambda_k(A,\mu)>\lambda_\infty(A,\mu)=-\infty
\]
and a measurable decomposition
\[
\mathcal B=E_1(x) \oplus \cdots \oplus E_k(x) \oplus E_\infty(x)
\]
such that for $x\in \mathcal R^\mu$,
\[
A(x) E_i(x) = E_i(f(x)), \quad i=1,\ldots,k,  
\]
\[
A(x) E_\infty(x) \subset E_\infty(f(x)), 
\]
and
\[
 \lim_{n \to \infty} \frac 1 n \log \lVert A^n(x)v\rVert=\lambda_i(A,\mu),
\]
for $v\in E_i(x)\setminus \{0\}$, $i\in \{1, \ldots k, \infty\}$. Moreover, each $E_i(x), i=1,\ldots,k$, is a finite-dimensional subspace of $\mathcal B$;
\item there exists an infinite sequence of numbers
\[
\lambda_1 (A,\mu)>\lambda_2(A,\mu) >\cdots >\lambda_k(A,\mu) > \ldots  >\lambda_\infty(A,\mu)=-\infty,
\]
\[
\lim_{k\to \infty}\lambda_k(A,\mu)=\lambda_\infty (A,\mu),
\]
and for each $k\in \mathbb N$ a  measurable decomposition
\[
\mathcal B=E_1(x) \oplus \cdots \oplus E_k(x) \oplus \ldots \oplus E_\infty(x)
\]
such that for $x\in \mathcal R^\mu$,
\[
A(x) E_i(x)= E_i(f(x)), \quad  i\in \N, 
\]
\[
A(x)E_\infty(x)\subset E_\infty(f(x)), 
\]
and
\[
\lim_{ n\to \infty} \frac 1 n\log \lVert A^n(x)v\rVert=\lambda_i(A,\mu), 
\]
for $v\in E_i(x)\setminus \{0\}$, $i\in \mathbb N\cup \{\infty\}$.
Moreover, each $E_i(x), i\in \N$ is a finite-dimensional subspace of $\mathcal B$.
\end{enumerate}

\end{theorem}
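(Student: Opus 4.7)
The plan is to invoke Kingman's subadditive ergodic theorem applied to the volume functions. For fixed $k \in \N$ with $k \leq \dim \B$, Lemma~\ref{lemma: subadditive} asserts that $T \mapsto V_k(T)$ is continuous and submultiplicative, hence $\varphi_n^{(k)}(x) := \log V_k(A^n(x))$ is measurable and subadditive under $f$. Because $A$ is continuous on the compact space $M$, $(\varphi_1^{(k)})^+$ is bounded and therefore $\mu$-integrable, so Kingman's theorem together with ergodicity furnishes, $\mu$-almost everywhere, a constant
\[
\Lambda_k := \lim_{n \to \infty} \frac{1}{n} \log V_k(A^n(x)) \in [-\infty, \infty).
\]
Setting $\Lambda_0 := 0$ and $\lambda_k := \Lambda_k - \Lambda_{k-1}$, monotonicity $\lambda_{k+1} \leq \lambda_k$ follows from the inequality $V_{k+1}(T) \leq C \cdot V_k(T) F_{k+1}(T)$ extracted from Lemma~\ref{lemma: relation volume growth}.

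The dichotomy between cases (1) and (2) is a direct consequence of compactness. Since $A^n(x)$ is a product of compact operators it is itself compact, so its Kolmogorov numbers satisfy $F_k(A^n(x)) \to 0$ as $k \to \infty$. Combined with the comparison $V_k(T) \asymp \prod_{j=1}^k F_j(T)$ of Lemma~\ref{lemma: relation volume growth}, this forces either $\Lambda_k = -\infty$ from some finite index onwards (case (1)) or $\lambda_k \to -\infty$ with infinitely many finite exponents (case (2)). Taking $\Reg$ to be the full-measure set on which every $\Lambda_k$ attains its ergodic value furnishes the required Borel set.

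The substantial work is then to construct the equivariant measurable splitting realising the $\lambda_i$. The plan is first to build the \emph{slow} filtration $F_i(x) := \{\, v \in \B : \limsup_{n \to \infty} \tfrac{1}{n}\log \lVert A^n(x) v \rVert \leq \lambda_{i+1} \,\}$---closed, measurable, nested and $A$-invariant by construction---and then to extract finite-dimensional equivariant complements $E_i(x)$ via a pullback along $f^{-1}$, namely as the limit, as $n \to \infty$, of the image under $A^n(f^{-n}(x))$ of a well-chosen finite-dimensional subspace selected by the top Kolmogorov numbers. The key analytic inputs are (a) finite-dimensionality of each $E_i$, ensured by compactness and the spectral gap $\lambda_i > \lambda_{i+1}$; (b) sub-exponential decay of the angle between $F_i(x)$ and the candidate fast space $E_1(x) \oplus \cdots \oplus E_i(x)$ along the $f$-orbit, proved by a Borel--Cantelli argument exploiting the gap $\lambda_i - \lambda_{i+1} > 0$; and (c) convergence of the pullback construction in the Banach setting, where the Kolmogorov and Gelfand numbers of Lemma~\ref{lemma: relation volume growth} take the role reserved for singular values in the Hilbert-space setting. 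Step (b) is the main obstacle: without angle control the filtration does not yield a measurable direct-sum decomposition with the correct exponents, and in infinite dimensions the standard QR- or SVD-based shortcuts from the finite-dimensional theory are unavailable. The residual subspace is then $E_\infty(x) := \bigcap_i F_i(x)$, with its asymptotic growth bound following from a final $\limsup$ argument applied to the $\Lambda_k$.
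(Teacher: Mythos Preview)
The paper does not prove this theorem at all: it is stated as a background result and attributed to~\cite{FLQ13} (with~\cite{AB, GTQ} as alternative references). No argument is given, and none is needed for the purposes of the paper, which merely uses the MET as input to the later results on joint and generalized spectral radii.

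Your proposal, by contrast, sketches an actual proof along the lines of the volume-based approach of~\cite{AB}. The first two paragraphs are sound: Kingman applied to $\log V_k(A^n(\cdot))$ using Lemma~\ref{lemma: subadditive}, the definition of $\lambda_k$ as successive differences, and the dichotomy coming from compactness are all correct and standard. However, the third paragraph is not a proof but a programme. You correctly identify the hard step---producing equivariant finite-dimensional complements $E_i(x)$ to the slow filtration via a pullback/limit construction---and you flag the genuine difficulties (angle control, convergence of the pullback in the absence of SVD), but you do not carry any of them out. In particular, item~(b) on sub-exponential angle decay requires a quantitative argument (typically a tempering lemma plus a careful Gr\"ohwall-type estimate or the measurable selection of approximate fast spaces), and item~(c) requires showing that the images $A^n(f^{-n}(x))W_n$ of suitably chosen $W_n$ actually converge in the appropriate Grassmannian; neither is automatic. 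As written, this is an accurate roadmap of~\cite{AB} or~\cite{GTQ}, not a self-contained proof.

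Given that the paper itself simply cites the result, the appropriate fix is either to do the same---replace your sketch by a reference to~\cite{FLQ13, AB, GTQ}---or, if you intend to supply a proof, to fill in the missing analytic steps, which would amount to reproducing a substantial portion of one of those papers.
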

Recall that numbers $\lambda_i(A,\mu)$ are called \emph{Lyapunov exponents} of $A$ with respect to $\mu$. Moreover, $d_i(A,\mu):=\dim E_i(x)$  is said to be a \emph{multiplicity} of $\lambda_i(A,\mu)$.

We observe that the previous result holds under a much more general assumption on the base dynamics, namely, $f$ only has to be invertible and measurable. In particular, the Anosov Closing property is not necessary for it. Nevertheless, this property plays an important part in our results because it allow us to give a simplified description of the Lyapunov spectrum in terms of the behaviour of the cocycle on periodic orbits (see for instance \cite{Kal11,Bac18,BD}).

\subsection{The Joint and Generalized spectral radii} 
Finally, we recall some notions that  will be of central importance to our paper. 
Given $s>0$ and $T\in B(\B,\B)$, let $d=\text{dim}(\B)$ and consider
$$\varphi^s_c(T)=\begin{cases}
c_1(T)c_2(T)\cdots c_{\lfloor s\rfloor}(T)c_{\lfloor s\rfloor+1}(T)^{s-\lfloor s\rfloor} & \text{for $s< d$;}\\
\lvert \det T\rvert^{s/d} & \text{for $s\ge d$ and $d<\infty$;}
\end{cases}$$
and
$$\varphi^s_F(T)= \begin{cases}F_1(T)F_2(T)\cdots F_{\lfloor s\rfloor}(T)F_{\lfloor s\rfloor+1}(T)^{s-\lfloor s\rfloor} & \text{for $s<d$;}\\
\lvert \det T\rvert^{s/d} & \text{for $s\ge d$ and $d<\infty$.}
\end{cases}$$
\begin{remark}\label{rmk1}
Assume that $d<\infty$. Then, $c_i(T)$ is precisely the  $i$-th singular value of $T$ for $i=1, \ldots, d$ (see~\cite{BM} for example).  Hence, $\varphi^s_c(T)$ coincides with $\varphi^s(T)$, where $\varphi^s$ is a singular value function (see~\cite[p.2]{ZCL18}). The singular value function was introduced by Falconer \cite{F88} in the study of Hausdorff dimension of self-affine fractals and since then it have been playing an important part in the study of dimension theory \cite{F94}. For instance, in \cite{BM18} the authors describe the equilibrium states associated to this function for certain families of matrices which in turn are connected with the study of Hausdorff dimension of some self-affine sets (see also the references therein). Bearing in mind the importance of this function in the finite dimensional setting, it is natural to generalize it to infinite dimensions and investigate its properties and applications.
\end{remark}

Then, we define the \emph{s-joint spectral radius of $(A,f)$} by 
\begin{equation*}
\begin{split}
\hat{\rho}_s(A)&:= \lim _{n \to +\infty} \sup_{x\in M} \varphi^s_c(A^n(x))^{\frac{1}{n}}\\
&=\lim _{n \to +\infty} \sup_{x\in M} \varphi^s_F(A^n(x))^{\frac{1}{n}},
\end{split}
\end{equation*}
where the second equality follows from Lemma~\ref{lemma: relation volume growth}. Finally, the \emph{s-generalized spectral radius of $(A,f)$} is  defined by
$$\overline{\rho}_s(A):=\limsup_{n\to +\infty} \left( \sup_{x\in M} \rho_s(A^n(x))\right)^{\frac{1}{n}},$$ 
where \[\rho_s(T):= \lim_{n\to +\infty} \varphi _c^s(T^n)^{\frac{1}{n}}\quad  \text{for any $T\in B(\B,\B)$. }\]

\begin{remark}
It follows from Remark~\ref{rmk1}  that in the case when $d<\infty$, $\hat{\rho}_s(A)$ and $\overline{\rho}_s(A)$ coincide respectively  with the values of the joint spectral radius and the  generalized spectral radius of $A$ which were studied  in~\cite{ZCL18}. For finite dimensional linear operators $\hat{\rho}_1(A)$ and $\overline{\rho}_1(A)$ have seen important applications, notably in the fundamental work of Daubechies and Lagarias \cite{DL92i, DL92ii} on wavelets regularity. In the infinite dimensional setting, these objects have also seen many usefull applications, for instance, to establish a number of results in invariant subspace theory \cite{S84, ST00}. 
\end{remark}

\section{Main results}
We are now in position to formulate the main results of our paper.  For $\alpha >0$, 
we say that $A:M\to B(\B,\B)$ is  an  \emph{$\alpha$-H\"{o}lder continuous map} if there  exists a constant $C_2>0$ such that
\begin{displaymath}
\norm{A(x)-A(y)} \leq C_2 d(x,y)^{\alpha},
\end{displaymath}
for all $x,y\in M$.

The following is our first result. It can be described as an extension of~\cite[Theorem C.]{ZCL18} to the case of compact cocycles acting on arbitrary Banach spaces. 
\begin{theorem}\label{theo: berger-wang} 

Let $f: M\to M $ be a homeomorphism satisfying the Anosov Closing property and $A:M\to B_0(\B,\B)$ an $\alpha$-H\"{o}lder continuous map. Then,
$$\hat{\rho}_s(A)=\overline{\rho}_s(A)$$
for every $s>0$.
\end{theorem}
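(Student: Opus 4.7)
The plan is to establish the two inequalities $\overline{\rho}_s(A) \le \hat{\rho}_s(A)$ and $\hat{\rho}_s(A) \le \overline{\rho}_s(A)$ by independent arguments. The first is a soft consequence of submultiplicativity of $\varphi_c^s$; the second is the substantive part and requires a variational principle combined with periodic approximation of Lyapunov exponents, for which~\cite{BD} will be the central input.

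For the easy direction, I would first observe that $\varphi_c^s$ is submultiplicative up to a constant $K$ depending only on $s$. Indeed, by Lemma~\ref{lemma: subadditive} each $V_k$ is submultiplicative, and by Lemma~\ref{lemma: relation volume growth} the quantities $\prod_{j=1}^{k} c_j$ and $V_k$ are uniformly comparable. Iterating submultiplicativity gives $\varphi_c^s(T^n) \le K^{n-1}\varphi_c^s(T)^n$, so $\rho_s(T) \le K\varphi_c^s(T)$ for every $T \in B(\B,\B)$. Applying this with $T = A^n(x)$, taking $\sup_x$, raising to the power $1/n$ and letting $n \to \infty$, the factor $K^{1/n}$ tends to $1$ and we obtain $\overline{\rho}_s(A) \le \hat{\rho}_s(A)$.

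The hard direction I would prove through a two-step reduction: first from the joint spectral radius to a supremum over invariant measures of a Lyapunov-sum functional, and then from ergodic measures to periodic measures. Concretely, the sequence $a_n := \log \sup_{x\in M} \varphi_F^s(A^n(x))$ is essentially subadditive (by the same submultiplicativity argument as above), so $\lim_n a_n/n = \log \hat{\rho}_s(A)$. A variational principle along the lines of Cao--Feng--Huang, applied to the continuous subadditive family of potentials $x \mapsto \log \varphi_F^s(A^n(x))$, should then yield
\[
\log \hat{\rho}_s(A) \;=\; \sup_\mu \Lambda_s(A,\mu),
\]
where the supremum runs over ergodic $f$-invariant Borel probability measures on $M$ and
\[
\Lambda_s(A,\mu) \;:=\; \sum_{i=1}^{\lfloor s\rfloor}\lambda_i(A,\mu) \,+\, (s-\lfloor s\rfloor)\lambda_{\lfloor s\rfloor+1}(A,\mu),
\]
with exponents repeated according to multiplicity. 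The identification of the $\mu$-almost-sure limit of $\tfrac{1}{n}\log \varphi_F^s(A^n(x))$ with $\Lambda_s(A,\mu)$ comes from the multiplicative ergodic theorem together with the volume-growth interpretation of $\varphi_F^s$ provided by Lemma~\ref{lemma: relation volume growth}. Now, fixing an ergodic $\mu$ and $\varepsilon>0$, I would invoke the Anosov closing property together with the $\alpha$-H\"older regularity of $A$ and the periodic approximation of Lyapunov exponents from~\cite{BD} to produce a periodic point $p$ of some period $n$ such that the first $\lfloor s\rfloor+1$ Lyapunov exponents of $A$ at $\mu$ are $\varepsilon$-close to $\tfrac{1}{n}\log|\eta_i(A^n(p))|$, where $\eta_1,\eta_2,\ldots$ denote the eigenvalues of the compact operator $A^n(p)$ ordered by decreasing modulus. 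Since for the periodic orbit measure $\mu_p$ one has $\tfrac{1}{n}\log \rho_s(A^n(p)) = \Lambda_s(A,\mu_p)$, this forces $\tfrac{1}{n}\log \rho_s(A^n(p)) \ge \Lambda_s(A,\mu) - C\varepsilon$; taking $\sup_\mu$ and letting $\varepsilon\to 0$ yields $\hat{\rho}_s(A) \le \overline{\rho}_s(A)$.

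The main obstacle, I expect, is arranging that the periodic approximation of~\cite{BD} holds simultaneously for the first $\lfloor s\rfloor+1$ Lyapunov exponents, or at least for the weighted sum $\Lambda_s$; a pointwise approximation of individual exponents does not quite suffice, and the semi-invertible, infinite-dimensional setting rules out a naive exterior-power reduction. A secondary technical issue is justifying the subadditive variational principle in the present cocycle setting---one may have to establish it directly, for instance by combining ergodic decomposition with an approximation of maximising measures by ergodic ones, rather than citing a finite-dimensional or strongly-measurable result off the shelf.
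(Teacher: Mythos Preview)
Your proposal is correct and follows essentially the same route as the paper: submultiplicativity for $\overline{\rho}_s \le \hat{\rho}_s$, then a subadditive variational principle combined with the periodic approximation from~\cite[Theorem 2.5]{BD} for the reverse inequality, together with the spectral identification $\tfrac{1}{n}\log\rho_s(A^n(p)) = \Lambda_s(A,\mu_p)$ at periodic points via the Riesz decomposition of the compact operator $A^n(p)$. Both obstacles you flag are resolved exactly as you anticipate: the variational principle is supplied by~\cite[Lemma A.3]{IM13} (see Lemma~\ref{lemma: log of joint spectral raidus}), and~\cite[Theorem 2.5]{BD} approximates all Lyapunov exponents of an ergodic measure simultaneously by those of a periodic orbit, so the weighted sum $\Lambda_s$ is handled directly without any exterior-power reduction.
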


\begin{remark}\label{918}
The first result in the spirit of Theorem~\ref{theo: berger-wang} was obtained by Berger and Wang~\cite{BW} and, in fact, their result is a particular case of Theorem~\ref{theo: berger-wang}. More precisely, let $\mathcal M$ be a compact  subset of $M_d$, where $M_d$ denotes the space of all real matrices of order $d$. Set $M=\mathcal M^{\mathbb Z}$
and equip $M$ with the product topology so that it becomes a compact metric space. Furthermore, let $f\colon M\to M$ be a two-sided shift given  by $f((M_i)_{i\in \mathbb Z})=(M_{i+1})_{i\in \mathbb Z}$ for $(M_i)_{i\in \mathbb Z}\in M$.  Finally, let  $A\colon M\to M_d$ be given by $A((M_i)_{i\in \mathbb Z})=M_0$, $(M_i)_{i\in \mathbb Z}\in M$. 
It turns out that the main result from~\cite{BW} can be recovered from Theorem~\ref{theo: berger-wang}  for this particular choice of $M$, $f$ and $A$ and $s=1$. We note that strictly speaking, the main result from~\cite{BW} requires only that $\mathcal M \subset M_d$ is bounded. However, it turns out that this general version can be deduced from the one previously stated 
by replacing $\mathcal M$ with its closure (see~\cite[p.8]{IM}). Similar results for the case when $A$ is as above but when $(M,f)$ is a subshift of finite type were obtained by Dai~\cite{Dai14}. Finally, as we already mentioned,  the general case that corresponds to our Theorem~\ref{theo: berger-wang}  when $\mathcal B$ is finite-dimensional was treated in~\cite{ZCL18}.

It is also worth noticing that, as pointed out by Gurvits~\cite[Theorem A.1.]{Gu95}, the previously described result by Berger and Wang (and consequently our Theorem \ref{theo: berger-wang}) doesn't hold, in general, in the infinite-dimensional case. In fact, Gurvits presented an example of two operators $T_1,T_2\in B(\B,\B)$ for which the generalized spectral radius is strictly smaller than the joint spectral radius. However, some partial extensions of Berger-Wang's formula to the infinite dimensional setting were obtained in~\cite{ST00, ST02}, with the most general result being that of Morris~\cite[Theorem 1.4.]{IM}. In the particular case when dealing with compact operators, the result of Morris is covered by Theorem~\ref{theo: berger-wang} and it corresponds to the case when $M=\mathcal M^{\mathbb Z}$, where $\mathcal M$ is a (pre)compact subset of $B_0(\mathcal B, \mathcal B)$ and with $f$ and $A$ as in the previous paragraph and $s=1$.  In Theorem~\ref{theo: berger-wang}, we deal with a general case when $(M, f)$ is any topological dynamical system satisfying Anosov Closing property and where $A$ is an arbitrary $\alpha$-H\"{o}lder continuous compact cocycle. 
\end{remark}

For $\alpha >0$, set
\[
C^\alpha (M, B_0(\B, \B))  :=\bigg{\{} A \colon M\to B_0(\B, \B): \text{$A$ is an  $\alpha$-H\"{o}lder continuous map} \bigg{\}}.
\]
 We note that $C^\alpha (M, B_0(\B, \B))$ is a Banach space with respect to the norm
\[
\lVert A\rVert_\alpha:=\sup_{x, y\in M}\lVert A(x)-A(y)\rVert+\sup_{x\neq y} \bigg{(} \frac{\lVert A(x)-A(y)\rVert}{d(x, y)^\alpha}\bigg{)}.
\]
The following is our second result. It represent an extension of~\cite[Theorem A.]{ZCL18}   to the case of compact cocycles acting on arbitrary Banach spaces. 
\begin{theorem}\label{theo: continuity} 
Let $f: M\to M $ be a homeomorphism satisfying the Anosov Closing property. Then, the map
$$A\to \hat{\rho}_s(A)=\overline{\rho}_s(A)$$
is continuous on $C^\alpha (M, B_0(\B, \B)) $.
\end{theorem}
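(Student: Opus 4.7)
The plan is to prove continuity by sandwiching the common value $\hat\rho_s(A) = \overline\rho_s(A)$ (Theorem~\ref{theo: berger-wang}) between an upper semi-continuous upper bound (via $\hat\rho_s$) and a lower semi-continuous lower bound (via $\overline\rho_s$).

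For the upper bound, I combine Lemmas~\ref{lemma: subadditive} and~\ref{lemma: relation volume growth} to obtain a constant $C=C(s)>0$ such that $\varphi_c^s(TS)\le C\,\varphi_c^s(T)\,\varphi_c^s(S)$ for all $T,S\in B(\B,\B)$. Setting $a_n(A):=\sup_{x\in M}\varphi_c^s(A^n(x))$ and using the cocycle relation $A^{n+m}(x)=A^m(f^n(x))\,A^n(x)$, the sequence $\{C\,a_n(A)\}$ is submultiplicative, so Fekete's lemma yields
\[
\hat\rho_s(A) \;=\; \inf_{n\ge 1}\bigl(C\,a_n(A)\bigr)^{1/n}.
\]
For each fixed $n$, the map $A\mapsto a_n(A)$ is continuous: the iterate $A\mapsto A^n$ is continuous from $C^\alpha(M,B_0(\B,\B))$ into $C(M,B(\B,\B))$ with the uniform norm, Gelfand numbers are $1$-Lipschitz in the operator norm (so $\varphi_c^s$ is continuous on $B(\B,\B)$), and $M$ is compact. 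An infimum of continuous functions is upper semi-continuous.

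For the lower bound, the core step is the periodic-orbit characterization
\[
\overline\rho_s(A) \;=\; \sup\bigl\{\rho_s(A^N(p))^{1/N}: p\in M,\ f^N(p)=p,\ N\ge 1\bigr\}.
\]
The direction ``$\ge$'' is immediate: for periodic $p$ of period $N$, $A^{kN}(p)=(A^N(p))^k$ together with $\rho_s(T^k)=\rho_s(T)^k$ gives $\overline\rho_s(A)\ge\rho_s(A^N(p))^{1/N}$. For ``$\le$'', I use Theorem~\ref{theo: berger-wang} to reduce to bounding $\hat\rho_s(A)$: given $\varepsilon>0$, select $n$ and $x\in M$ with $\varphi_c^s(A^n(x))^{1/n}$ close to $\hat\rho_s(A)$ and $d(f^n(x),x)<\varepsilon_0$ (by recurrence); shadow $x$ by a periodic point $p$ of period $n$ via the Anosov Closing property; use the $\alpha$-H\"older continuity of $A$ together with the exponential decay $d(f^j(x),f^j(p))\le C_1 e^{-\theta\min\{j,n-j\}}d(f^n(x),x)$ to transfer the estimate to $\varphi_c^s(A^n(p))^{1/n}$; finally pass to high iterates $A^{kn}(p)=(A^n(p))^k$ so that $\varphi_c^s$ is replaced by $\rho_s$ in the limit. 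The approximation technology for Lyapunov exponents in the infinite-dimensional setting developed in~\cite{BD} (also the central ingredient of Theorem~\ref{theo: berger-wang}) is what legitimizes this chain.

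Granted this characterization, lower semi-continuity follows once $A\mapsto\rho_s(A^N(p))^{1/N}$ is continuous for each fixed $(N,p)$. The map $A\mapsto A^N(p)\in B_0(\B,\B)$ is clearly continuous, so it remains to check continuity of $\rho_s$ on $B_0(\B,\B)$. For compact $T$, the classical identity $c_k(T^m)^{1/m}\to|\lambda_k(T)|$ as $m\to\infty$ (with $\{|\lambda_k(T)|\}$ the absolute values of the eigenvalues of $T$ in decreasing order, counted with algebraic multiplicity) gives
\[
\rho_s(T) \;=\; \prod_{k=1}^{\lfloor s\rfloor}|\lambda_k(T)|\cdot|\lambda_{\lfloor s\rfloor+1}(T)|^{s-\lfloor s\rfloor}.
\]
Since the nonzero spectral points of a compact operator are isolated of finite algebraic multiplicity, Newburgh/Kato perturbation theory for isolated spectrum makes each $T\mapsto|\lambda_k(T)|$ continuous on $B_0(\B,\B)$, hence $\rho_s$ is continuous there. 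Thus $\overline\rho_s$ is a supremum of continuous functions, so lower semi-continuous; combined with the upper semi-continuity of $\hat\rho_s$ and Theorem~\ref{theo: berger-wang}, continuity follows. The main obstacle is the nontrivial direction of the periodic-orbit characterization: converting the additive, norm-level shadowing estimate on $A^n(p)-A^n(x)$ into a multiplicative estimate on $\varphi_c^s(A^n(p))/\varphi_c^s(A^n(x))$, and then into a lower bound on the spectral quantity $\rho_s(A^N(p))$. In infinite dimensions, lower Gelfand numbers of $A^n(x)$ can be exponentially smaller than $\|A^n(x)\|$, so additive shadowing does not automatically yield multiplicative control; the H\"older hypothesis, the exponential decay in Anosov Closing, and the results of~\cite{BD} must be combined carefully.
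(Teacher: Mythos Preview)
Your overall architecture---upper semi-continuity of $\hat\rho_s$ as an infimum of continuous functions, lower semi-continuity via a periodic-orbit characterization together with the continuity of $T\mapsto\rho_s(T)$ on $B_0(\B,\B)$---is correct and is exactly what the paper does. Your argument for the continuity of $\rho_s$ on compact operators (eigenvalue formula plus continuity of the spectrum) is essentially Lemma~\ref{lemma: continuity mutlidim spectral radius}. Your upper semi-continuity argument via Fekete is actually more elementary than the paper's: the paper routes this step through the subadditive variational principle (Lemma~\ref{lemma: log of joint spectral raidus}), expressing $\log\hat\rho_s(A)$ as $\max_{\mu\in\mathcal M_f}\inf_n\frac1n\int\log V_s(A^n)\,d\mu$ and then taking the infimum of continuous functions of $(A,\mu)$; your direct Fekete argument avoids measures entirely and is perfectly valid.

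Where you diverge from the paper is in the lower semi-continuity step, and here there is a structural issue worth flagging. You invoke Theorem~\ref{theo: berger-wang} to pass from $\overline\rho_s$ to $\hat\rho_s$ and then argue $\hat\rho_s\le\sup_{\text{periodic}}\rho_s(A^N(p))^{1/N}$. But in the paper, Theorem~\ref{theo: berger-wang} is proved \emph{after} Theorem~\ref{theo: continuity}, and its proof uses precisely the periodic characterization~\eqref{1214} established during the proof of Theorem~\ref{theo: continuity}. So your appeal to Theorem~\ref{theo: berger-wang} is at best redundant: you still have to obtain~\eqref{1214}, and that is the actual content. The paper gets~\eqref{1214} cleanly through the measure-theoretic route: Lemma~\ref{lemma: Lyap exp} writes $\log\hat\rho_s(A)$ as a maximum over $\mathcal M_f$ of a sum of Lyapunov exponents, and then~\cite[Theorem~2.5]{BD} says this maximum is already attained on periodic measures, yielding~\eqref{1214} without any shadowing carried out by hand. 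Your direct shadowing sketch, as written, does not close: having $\varphi_c^s(A^n(p))^{1/n}$ close to $\hat\rho_s(A)$ gives only an \emph{upper} bound on $\rho_s(A^n(p))^{1/n}$ (since $\rho_s(T)\le C\,\varphi_c^s(T)$), whereas you need a \emph{lower} bound. You correctly identify this as the main obstacle and defer to~\cite{BD}; that is legitimate, but note that what~\cite{BD} actually delivers is the approximation of Lyapunov exponents by those at periodic orbits, so the paper's measure-theoretic packaging (Lemma~\ref{lemma: Lyap exp} $+$~\cite[Theorem~2.5]{BD} $\Rightarrow$~\eqref{1214}) is the natural way to invoke it, and it makes the appeal to Theorem~\ref{theo: berger-wang} unnecessary.
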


\begin{remark}
We stress that the conclusion of Theorem~\ref{theo: continuity}  can fail if $f$ doesn't satisfy the Anosov Closing property. Indeed, explicit counterexamples were constructed in~\cite{DHH17, WY13} (see~\cite[p.2]{ZCL18} for a detailed discussion).

\end{remark}

\begin{remark}
Finally, we would like to explain why we restricted our attention to the case of compact cocycles. It turns out (see~\cite[Theorem 2.1.]{Deg08}) that the spectral radius mapping is not a continuous function on $B(\B, \B)$.  Hence, Theorem~\ref{theo: continuity}  doesn't hold if one  replaces
$C^\alpha (M, B_0(\B, \B))$ by $C^\alpha (M, B(\B, \B))$ even in the case when $A$ is a constant map. 
\end{remark}

\section{Proofs}
In this section we present proofs of our main results. 
\subsection{Proof of Theorem \ref{theo: continuity}}
We can assume that $\B$ is infinite-dimensional since the case when $d<\infty$ is covered by~\cite[Theorem A.]{ZCL18}.
 We first present several auxiliary results. 
\begin{lemma} \label{lemma: log of joint spectral raidus}
For any $s\in \N$,
\begin{displaymath}
\begin{split}
\log \hat{\rho}_s(A)&=\max_{\mu \in \mathcal{M}_f} \left\lbrace \inf_n \frac{1}{n}\int \log V_s(A^n(x))d\mu \right\rbrace  \\
&=\max_{\mu \in \mathcal{M}_f} \left\lbrace \lim_n \frac{1}{n}\int \log \varphi^s_c(A^n(x))d\mu   \right\rbrace \\
&=\max_{\mu \in \mathcal{M}_f} \left\lbrace \lim_n \frac{1}{n}\int \log \varphi^s_F(A^n(x))d\mu  \right\rbrace,
\end{split}
\end{displaymath}
where $\mathcal{M}_f$ denotes the set of all $f$-invariant probability measures.
\end{lemma}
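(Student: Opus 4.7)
The plan is to reduce the three right-hand expressions to a single one via Lemma~\ref{lemma: relation volume growth}, and then to establish a Bowen-type variational principle for the subadditive cocycle $\phi_n(x):=\log V_s(A^n(x))$ using empirical measures.  Since $s\in\N$ we have $\varphi^s_c(T)=c_1(T)\cdots c_s(T)$ and $\varphi^s_F(T)=F_1(T)\cdots F_s(T)$, and Lemma~\ref{lemma: relation volume growth} gives a constant $C=C(s)>0$ with $C^{-1}V_s(T)\le \varphi^s_c(T),\varphi^s_F(T)\le C V_s(T)$.  After taking logarithms, integrating, and dividing by $n$, the $\log C$ contribution vanishes as $n\to\infty$, so the three right-hand expressions coincide provided any one of them exists.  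Lemma~\ref{lemma: subadditive} shows $V_s$ is submultiplicative; combined with the cocycle identity $A^{n+k}(x)=A^k(f^nx)A^n(x)$ and $f$-invariance of $\mu$, this makes $a_n:=\int \phi_n\,d\mu$ subadditive, and Fekete's lemma yields $\lim_n a_n/n=\inf_n a_n/n$.  The same reasoning applied to $b_n:=\log\sup_{x\in M}V_s(A^n(x))$ gives $\lim_n b_n/n=\inf_n b_n/n=\log\hat\rho_s(A)$.

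For the upper bound $(\le)$, integrating the trivial inequality $\phi_n(x)\le b_n$ against any $\mu\in\mathcal{M}_f$ and then applying $\inf_n$ followed by $\sup_\mu$ immediately yields $\sup_\mu\inf_n\tfrac{1}{n}\int\phi_n\,d\mu\le \log\hat\rho_s(A)$.

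For the lower bound $(\ge)$, which is the substance of the proof, assume $\hat\rho_s(A)>0$ (else the claim is trivial).  Choose $x_n\in M$ with $\phi_n(x_n)\ge b_n-1$ and form the empirical measure $\mu_n:=\tfrac{1}{n}\sum_{k=0}^{n-1}\delta_{f^k x_n}$; by weak-$*$ compactness of probability measures on $M$, pass to a subsequence $\mu_{n_j}\to\mu$, which is $f$-invariant by the standard $\|f_*\mu_n-\mu_n\|_{TV}=O(1/n)$ estimate.  Fix $m\in\N$; for each $k\in\{0,\dots,m-1\}$, decomposing $A^n(x_n)$ as a tail $A^r$ (with $r<m$) times $\lfloor(n-k)/m\rfloor$ blocks $A^m$ times an initial $A^k(x_n)$ and applying submultiplicativity of $V_s$ gives
\[
\phi_n(x_n)\le C_m+\sum_{j=0}^{\lfloor(n-k)/m\rfloor-1}\phi_m(f^{jm+k}x_n),
\]
where $C_m:=2\max_{0\le\ell<m,\,y\in M}\log V_s(A^\ell(y))<\infty$ by compactness of $M$ and continuity of $V_s$.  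Summing over $k$, the indices covered form a set $S\subset\{0,\dots,n-1\}$ whose complement has at most $m$ elements.  To convert the partial sum over $S$ into a full Birkhoff sum, truncate by $\tilde\phi_m^{(N)}:=\max(\phi_m,-N)\ge \phi_m$; the $O(m)$ missed indices contribute at most $mN$, yielding
\[
\tfrac{m}{n}\phi_n(x_n)\le \tfrac{m(C_m+N)}{n}+\int\tilde\phi_m^{(N)}\,d\mu_n.
\]
Letting $n=n_j\to\infty$, and using $\phi_{n_j}(x_{n_j})/n_j\to \log\hat\rho_s(A)$ together with the upper-semicontinuous Portmanteau inequality $\limsup_j\int\tilde\phi_m^{(N)}\,d\mu_{n_j}\le\int\tilde\phi_m^{(N)}\,d\mu$ (valid since $\tilde\phi_m^{(N)}$ is u.s.c.\ and bounded), we obtain $m\log\hat\rho_s(A)\le \int\tilde\phi_m^{(N)}\,d\mu$.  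Taking $N\to\infty$ and applying monotone convergence to $\tilde\phi_m^{(N)}\searrow\phi_m$ (bounded above by $\sup_y\log V_s(A^m(y))$) gives $m\log\hat\rho_s(A)\le \int\phi_m\,d\mu$.  Dividing by $m$ and taking $\inf_m$ then yields the reverse inequality and shows that $\mu$ attains the supremum, so it is a maximum.

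The main obstacle is that $\phi_m=\log V_s(A^m(\cdot))$ is only upper semi-continuous and may equal $-\infty$ (when $V_s(A^m(\cdot))$ vanishes, which can occur for compact operators in infinite dimensions).  This blocks the naive weak-$*$ convergence $\int\phi_m\,d\mu_n\to\int\phi_m\,d\mu$; the truncation-then-monotone-convergence device outlined above is the standard remedy and is the only non-routine ingredient of the argument.
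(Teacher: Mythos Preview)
Your argument is correct. The reduction of the three displayed expressions to a single one via Lemma~\ref{lemma: relation volume growth} and the observation that the sequence $a_n=\int \phi_n\,d\mu$ is subadditive (from Lemma~\ref{lemma: subadditive}) match the paper exactly. Where you diverge is in the core step: the paper simply invokes \cite[Lemma~A.3]{IM13}, which is a ready-made variational principle for subadditive sequences of continuous (extended-real-valued) functions over a compact system, to obtain
\[
\lim_{n\to\infty}\sup_{x\in M}\tfrac{1}{n}\log V_s(A^n(x))=\max_{\mu\in\mathcal M_f}\inf_n \tfrac{1}{n}\int \log V_s(A^n(x))\,d\mu,
\]
whereas you prove this identity from scratch via empirical measures and the standard block-decomposition/truncation argument. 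Your proof is thus a self-contained re-derivation of the cited lemma in this particular setting; the paper's route is shorter but relies on an external reference.

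Two minor remarks on your write-up. First, since $V_s$ is continuous (Lemma~\ref{lemma: subadditive}) and $A^m$ is continuous, the function $\phi_m=\log V_s(A^m(\cdot))$ is actually continuous as a map $M\to[-\infty,\infty)$, so your truncation $\tilde\phi_m^{(N)}$ is genuinely continuous and bounded; you get equality, not merely the upper-semicontinuous Portmanteau inequality, in the weak-$*$ limit. Second, the payoff of your explicit truncation device is that it transparently handles the possibility $V_s(A^m(x))=0$ (hence $\phi_m(x)=-\infty$), which can indeed occur for compact cocycles in infinite dimensions; the paper's appeal to \cite{IM13} leaves this point implicit.
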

\begin{proof}
The last two equalities follow directly from Lemma~\ref{lemma: relation volume growth}. Let us now proof that the first  equality holds. 
By Lemma~\ref{lemma: relation volume growth}, we have  that 
\begin{displaymath}
\begin{split}
\log \hat{\rho}_s(A)&=\log \left( \lim_{n\to +\infty} \sup_{x\in M}  V_s(A^n(x))^{\frac{1}{n}} \right)  \\
&=\lim_{n\to +\infty} \sup_{x\in M} \left( \frac{1}{n}\log V_s(A^n(x)) \right).
\end{split}
\end{displaymath}
It follows from Lemma~\ref{lemma: subadditive}  that we can apply~\cite[Lemma A.3]{IM13} for $f_n(x)=\log V_s(A^n(x))$ and we obtain that 
\begin{displaymath}
\begin{split}
\lim_{n\to +\infty} \sup_{x\in M} \left( \frac{1}{n}\log V_s(A^n(x)) \right)&=\inf_{n} \sup_{x\in M} \left( \frac{1}{n}\log V_s(A^n(x)) \right)\\
&=\max_{\mu \in \mathcal{M}_f}\left\lbrace \inf_{n}  \frac{1}{n} \int \log V_s(A^n(x)) d\mu \right\rbrace.
\end{split}
\end{displaymath}
The proof of the lemma is completed. 
\end{proof}

\begin{lemma} \label{lemma: Lyap exp}
For any $s\in \N$,
\begin{displaymath}
\inf_n \frac{1}{n}\int \log V_s(A^n(x))d\mu =\gamma_1(A,\mu)+\gamma_2(A,\mu)+\cdots +\gamma_s(A,\mu),
\end{displaymath}
where $\gamma_j(A,\mu)$ stands for the $j$-th Lyapunov exponent of $(A,f)$ with respect to $\mu$ counted with multiplicities. In particular,
$$\log \hat{\rho}_s(A)=\max_{\mu \in \mathcal{M}_f} \left\lbrace \gamma_1(A,\mu)+\gamma_2(A,\mu)+\cdots +\gamma_s(A,\mu)\right\rbrace. $$
\end{lemma}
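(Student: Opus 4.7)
The plan is to combine Kingman's subadditive ergodic theorem with the multiplicative ergodic theorem (MET) stated earlier. The integrand $\log V_s(A^n(\cdot))$ is, by Lemma~\ref{lemma: subadditive}, continuous and submultiplicative in $n$ (i.e.\ the sequence $\varphi_n(x)=\log V_s(A^n(x))$ is subadditive along orbits), so $\varphi_n$ is integrable (bounded above by $n\,s\log\sup_y\|A(y)\|$) and Kingman's theorem applies. It yields
\[
\inf_n\frac{1}{n}\int \log V_s(A^n(x))\,d\mu
=\lim_n\frac{1}{n}\int \log V_s(A^n(x))\,d\mu
=\int \Phi_s(x)\,d\mu,
\]
where $\Phi_s(x)=\lim_n \frac{1}{n}\log V_s(A^n(x))$ exists $\mu$-a.e.

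Next I would identify $\Phi_s(x)$ with the sum of the top $s$ Lyapunov exponents. By Lemma~\ref{lemma: relation volume growth} one has $V_s(A^n(x))$ comparable, up to the constant $C$ (independent of $n$ and $x$), to $\prod_{j=1}^{s}F_j(A^n(x))$, so $\Phi_s(x)=\lim_n \frac{1}{n}\sum_{j=1}^{s}\log F_j(A^n(x))$. By the MET applied to the compact cocycle $A$, for $\mu$-a.e.\ $x\in\Reg$ the finite-dimensional filtration $E_1(x)\oplus\cdots\oplus E_k(x)$ diagonalises the asymptotic action of $A^n(x)$, and in particular for each $j\le s$,
\[
\lim_{n\to\infty}\frac{1}{n}\log F_j(A^n(x))=\gamma_j(A,\mu),
\]
where the right-hand side is the $j$-th Lyapunov exponent counted with multiplicity. (This identification of the exponential growth rates of Kolmogorov numbers with Lyapunov exponents is the standard infinite-dimensional analogue of the fact that singular values grow at the rate of the Lyapunov exponents; it is implicit in the MET of~\cite{FLQ13,AB,GTQ}.) Summing over $j=1,\dots,s$ gives $\Phi_s(x)=\gamma_1(A,\mu)+\cdots+\gamma_s(A,\mu)$ for $\mu$-a.e.\ $x$.

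Because $\Phi_s$ is $\mu$-a.e.\ constant and equal to $\sum_{j=1}^{s}\gamma_j(A,\mu)$, integration gives the first asserted equality. The ``In particular'' statement then follows by plugging this identity into the first expression of Lemma~\ref{lemma: log of joint spectral raidus}:
\[
\log\hat{\rho}_s(A)
=\max_{\mu\in\mathcal{M}_f}\inf_n\frac{1}{n}\int\log V_s(A^n(x))\,d\mu
=\max_{\mu\in\mathcal{M}_f}\sum_{j=1}^{s}\gamma_j(A,\mu).
\]

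The main obstacle is the clean justification of $\frac{1}{n}\log F_j(A^n(x))\to\gamma_j(A,\mu)$ for each $j\le s$ in the Banach space setting, since for infinite-dimensional $\B$ the Kolmogorov numbers are not literally singular values and the MET in~\cite{FLQ13} is phrased in terms of the Oseledets filtration rather than in terms of $F_j$. I would handle this by inducting on $j$: the case $j=1$ is $\frac{1}{n}\log\|A^n(x)\|\to \gamma_1(A,\mu)$, which is immediate from the MET, and for $j>1$ one writes $\prod_{i=1}^{j}F_i(A^n(x))\asymp V_j(A^n(x))$ (by Lemma~\ref{lemma: relation volume growth}) and applies Kingman again together with the identification of $\lim_n \frac{1}{n}\log V_j(A^n(x))$ with $\gamma_1+\cdots+\gamma_j$ via the finite-dimensional restriction of $A^n(x)$ to $E_1(x)\oplus\cdots\oplus E_j(x)$, on which the MET gives the exact volume growth rate $\gamma_1+\cdots+\gamma_j$.
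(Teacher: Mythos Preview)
Your argument is correct and is essentially what the paper defers to: the paper's own proof of this lemma is a one-line citation to~\cite[Lemma~A.3]{DFGTV} for the first identity and then invokes Lemma~\ref{lemma: log of joint spectral raidus} for the ``In particular'' part. Your route via Kingman's theorem plus the MET plus the identification of the growth of $V_s$ (equivalently $\prod_{j\le s}F_j$) with the sum of the top $s$ exponents is exactly the content of that cited lemma, so there is no substantive difference in approach---you have simply unpacked the reference.

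Two small points are worth flagging. First, you tacitly assume $\mu$ is ergodic when you say ``$\Phi_s$ is $\mu$-a.e.\ constant'' and when you appeal to the MET as stated in the paper (which is formulated only for ergodic $\mu$); for general $\mu\in\mathcal{M}_f$ the exponents $\gamma_j(A,\mu)$ are not even defined in the text. This is harmless for the ``In particular'' conclusion, since the maximum over $\mathcal{M}_f$ of an affine functional of $\mu$ is attained on the extreme (ergodic) measures, but strictly speaking the first displayed identity should be read for ergodic $\mu$. Second, your closing inductive paragraph cleanly yields the lower bound $\liminf_n \tfrac{1}{n}\log V_j(A^n(x))\ge \gamma_1+\cdots+\gamma_j$ by testing $V_j$ on the Oseledets span, but the matching upper bound (no $j$-dimensional subspace can grow volume faster) is not quite immediate from what you wrote and requires the Oseledets filtration/complementary-space argument; this is precisely what is carried out in~\cite{AB,DFGTV}, so citing it there is appropriate.
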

\begin{proof}
The first claim was established in the proof of~\cite[Lemma A.3]{DFGTV},  while the second is a direct consequence of the first one  together with Lemma~\ref{lemma: log of joint spectral raidus}.
\end{proof}

\begin{lemma} \label{lemma: continuity mutlidim spectral radius}
For any $s\in \N$ and $T\in B(\B,\B)$, set
\begin{displaymath}
r_s(T):=\lim_{n\to +\infty} \frac{1}{n} \log V_s(T^n).
\end{displaymath}
Then, $T\to r_s(T)$ is a continuous map on $B_0(\B, \B)$.
\end{lemma}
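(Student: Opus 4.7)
The plan is to deduce continuity of $r_s$ from upper semicontinuity on all of $B(\B,\B)$, followed by a matching convergence estimate on $B_0(\B,\B)$ obtained by reducing $r_s(T)$ to a spectral quantity of $T$.

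For the upper semicontinuity, observe that by Lemma~\ref{lemma: subadditive} the sequence $\log V_s(T^n)$ is subadditive in $n$, so
\[
r_s(T)=\inf_{n\ge 1}\frac{1}{n}\log V_s(T^n).
\]
For each fixed $n$, the map $T\mapsto \frac{1}{n}\log V_s(T^n)$ is continuous (with values in $[-\infty,\infty)$) on $B(\B,\B)$, by continuity of $T\mapsto T^n$ together with Lemma~\ref{lemma: subadditive}. Hence $r_s$ is upper semicontinuous on all of $B(\B,\B)$ as an infimum of continuous functions.

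For the matching lower bound I would specialize to $T\in B_0(\B,\B)$ and invoke the classical spectral formula of K\"onig for the Gelfand numbers of iterates of a compact operator: $\lim_{n\to\infty} c_j(T^n)^{1/n}=|\mu_j(T)|$, where $\mu_1(T),\mu_2(T),\ldots$ are the nonzero eigenvalues of $T$ arranged in decreasing order of modulus and counted with algebraic multiplicity (with the convention that $\mu_j(T)=0$ when $T$ has fewer than $j$ nonzero eigenvalues). Combined with Lemma~\ref{lemma: relation volume growth}, this yields the identity
\[
r_s(T)=\sum_{j=1}^s\log|\mu_j(T)|.
\]
If $T_k\to T$ in $B_0(\B,\B)$, then by the classical stability of the nonzero spectrum of a compact operator under norm perturbations, for every $\delta>0$ that is not the modulus of an eigenvalue of $T$ the multiset of eigenvalues of $T_k$ with modulus greater than $\delta$ (counted with algebraic multiplicity) converges as $k\to\infty$ to the corresponding multiset for $T$, via continuity of the spectral projections attached to isolated parts of $\sigma(T)$. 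Choosing $\delta$ just below a jump of the modulus function of $T$ occurring at or just below index $s$ (or letting $\delta\downarrow 0$ when $|\mu_s(T)|=0$) and summing $\log|\mu_j(\cdot)|$ over the top $s$ eigenvalues of $T_k$, one deduces $r_s(T_k)\to r_s(T)$. Combined with the upper semicontinuity established above, this gives continuity of $r_s$ on $B_0(\B,\B)$.

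The main obstacle is the case where $|\mu_s(T)|$ coincides with neighboring moduli: a small norm perturbation may freely reshuffle eigenvalues inside such a cluster, so one cannot expect individual convergence $\mu_j(T_k)\to \mu_j(T)$ in the decreasing-modulus enumeration. However, inside a cluster all moduli converge to the common value $|\mu_s(T)|$, so the sum of the top $s$ log-moduli is insensitive to the reshuffling, which is exactly the robustness the argument requires. Note also that restricting to $B_0(\B,\B)$ is crucial here, since both K\"onig's formula and the stability of the spectral projections away from $0$ fail in general for noncompact operators, and indeed the corresponding continuity statement is known to fail on $B(\B,\B)$ (as recalled in the remark preceding the proof).
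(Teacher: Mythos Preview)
Your approach is essentially the same as the paper's: both reduce $r_s(T)$ to $\sum_{j=1}^s\log|\mu_j(T)|$ and then invoke the continuity of the (nonzero) spectrum of a compact operator under norm perturbation. The paper derives the identity $\xi_j(T)=\log|\mu_j(T)|$ by hand via the Riesz decomposition and an induction on $j$, whereas you import it wholesale as K\"onig's formula; your separate upper semicontinuity step (via $r_s=\inf_n\frac1n\log V_s(T^n)$) is a harmless extra, since your spectral argument already yields full continuity, and your explicit handling of eigenvalue clusters is somewhat more careful than the paper's bare citation of \cite{CM79}.
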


\begin{proof} 
We first observe that it follows from Lemma~\ref{lemma: subadditive} that $r_s(T)$ is well-defined for each $T\in B(\B, \B)$.
By Lemma~\ref{lemma: relation volume growth}, we have that 
\begin{displaymath}
r_s(T)=\lim_{n\to +\infty} \frac{1}{n} \log V_s(T^n) = \lim_{n\to +\infty} \frac{1}{n} \sum_{j=1}^s \log c_j(T^n).
\end{displaymath}
In particular, $r_1(T)$ is just the logarithm of the spectral radius of $T$. Thus, it follows from \cite{CM79} (see also \cite[Theorem 2.1]{Deg08})  that $T\to r_1(T)$ is  a continuous map on $B_0(\B, \B)$.

In order to treat the general case,  we start by recalling some classical facts about compact operators on Banach spaces. Let $T$ be  a compact operator acting on $\B$. Since we assumed that $\B$ is infinite-dimensional, 
 it follows from the Fredholm's Alternative (see~\cite[Theorem 6.2.8]{RR00}) that its spectrum $\sigma(T)$ can be written as 
$$\sigma(T)=\{0\}\cup \{\lambda_i: i\in \mathbb N\}$$
with
\[
|\lambda_1|\geq|\lambda_2|\geq |\lambda_3|\geq \ldots \quad \text{and} \quad \lim_{n\to \infty} \lambda_n=0
\]
where  each $\lambda_i$ is an   eigenvalue of $T$. Moreover, from the Riesz Decomposition Theorem for compact operators (see \cite[Theorem 6.4.11]{RR00} and~\cite[Corollary 6.4.12]{RR00}),  we conclude  that for every $i\in \mathbb N$, 
$$\B=\mathcal{N}_{\lambda_i}\bigoplus \mathcal{R}_{\lambda_i},$$
where $\mathcal{N}_{\lambda_i}=\text{ker}(T-\lambda_i)^N$ and $\mathcal{R}_{\lambda_i}=\Ima (T-\lambda_i)^N$ for some $N\in \mathbb{N}$. Furthermore, $\mathcal{N}_{\lambda_i}$ and $\mathcal{R}_{\lambda_i}$ are invariant under $T$, $\mathcal{N}_{\lambda_i}$ is finite-dimensional, $\sigma(T_{\mid \mathcal{N}_{\lambda_i}})=\{\lambda_i\}$ and $\sigma(T_{\mid \mathcal{R}_{\lambda_i}})=\sigma(T)\setminus \{\lambda_i\}$. Set 
\begin{displaymath}
\begin{split}
\xi_j(T)&:= \lim_{n\to +\infty} \frac{1}{n}  \log c_j(T^n) \\
&= \lim_{n\to +\infty} \frac{1}{n}  \log V_j(T^n) -\lim_{n\to +\infty} \frac{1}{n}  \log V_{j-1}(T^n),
\end{split}
\end{displaymath}
for $j\in \mathbb N$.  Observe that
\begin{equation}\label{828}
r_s(T)=\sum_{j=1}^s \xi_j(T).
\end{equation}
We claim that
\begin{equation}\label{803}
\xi_j(T) =\log \lvert \lambda_k\rvert, 
\end{equation}
for $k\in \mathbb N$ and $j\in \{ \text{dim} (N_{\lambda_{0}})+\ldots+\text{dim} (N_{\lambda_{k-1}})+1, \ldots, \text{dim} (N_{\lambda_{0}})+\ldots+\text{dim} (N_{\lambda_{k}}) \}$, where $\text{dim}(N_{\lambda_0}):=0$.  Indeed, the fact that~\eqref{803} holds for $j=1$ was already observed. Take now $j=2$. If $\text{dim}(\mathcal{N}_{\lambda_1})=1$,  then one can conclude that $\xi_2(T)=\log \lvert \lambda_2\rvert$ by applying~\eqref{803} for $j=1$ and  $T_{\mid \mathcal{R}_{\lambda_1}}$ (instead of $T$).
Assume now that  $\text{dim}(\mathcal{N}_{\lambda_1})\geq 2$. Then, if $V\subset \B$ is a $1$-codimensional subspace we have that $V\cap \mathcal{N}_{\lambda_1}\neq \{0\}$.  Let us fix $v_V\in \mathcal{N}_{\lambda_1}\cap V$ such that $\lVert v_V\rVert=1$. We have that 
\[
c_2(T^n)=\inf_V\lVert (T^n)_{\rvert_V}\rVert \ge \inf_V \lVert T^n v_V\rVert \ge \lVert ((T_{\mid \mathcal{N}_{\lambda_1}})^{-1})^n\rVert^{-1}.
\]
Hence,
\[
\begin{split}
\xi_2(T) &=\lim_{n\to +\infty} \frac{1}{n}  \log c_2(T^n) \ge -\lim_{n\to +\infty} \frac{1}{n}  \log \lVert ((T_{\mid \mathcal{N}_{\lambda_1}})^{-1})^n\rVert =\log |\lambda_1|,
\end{split}
\]
since $r_1((T_{\mid \mathcal{N}_{\lambda_1}})^{-1})=\lvert \lambda_1\rvert^{-1}$. 
This easily implies that that $\xi_2(T)=\log |\lambda_1|$. Hence, \eqref{803} holds for $j=2$. By iterating the above argument one can conclude that~\eqref{803} holds  for each $j\in \mathbb N$.

Finally, since $T\to \sigma(T)$ is continuous on $B_0(\B, \B)$ (see \cite{CM79}),  the conclusion of the lemma follows from~\eqref{828} and~\eqref{803}.
\end{proof}

We are now in position to complete the proof of~Theorem \ref{theo: continuity}.
Suppose initially that $s\in \N$. 
By Lemma~\ref{lemma: subadditive} we know that $(A,\mu)\to \frac{1}{n} \int \log V_s(A^n(x))d\mu $ is  a continuous map for every $n\in \N$.
 In particular, $A\to \inf_n \frac{1}{n} \int \log V_s(A^n(x))d\mu $ is upper-semicontinuous. Thus, the compactness of $\mathcal{M}_f$ combined with Lemma~\ref{lemma: log of joint spectral raidus} implies that $A\to \hat{\rho}_s(A)$ is upper-semicontinuous. 

On the other hand, by Lemma~\ref{lemma: Lyap exp} combined with~\cite[Theorem 2.5]{BD} we obtain that
\begin{equation} \label{eq: joint spectral radius X Lyap on periodic points}
\begin{split}
\log \hat{\rho}_s(A)&=\max_{\mu \in \mathcal{M}_f} \left\lbrace \gamma_1(A,\mu)+\gamma_2(A,\mu)+\cdots +\gamma_s(A,\mu)\right\rbrace \\
&= \max_{\mu \in \mathcal{M}_f(Per)} \left\lbrace \gamma_1(A,\mu)+\gamma_2(A,\mu)+\cdots +\gamma_s(A,\mu)\right\rbrace,
\end{split}
\end{equation}
where $\mathcal{M}_f(Per)$ denotes  the set of all $f$-invariant  probability measures supported on periodic orbits. Now, if $p\in M$ satisfies $f^k(p)=p$ and $\mu_p$ is the $f$-invariant measure supported on the orbit of $p$,  then 
\begin{equation}\label{eq: lyap X r_s}
\begin{split}
\gamma_1(A,\mu_p)+\gamma_2(A,\mu_p)+\cdots +\gamma_s(A,\mu_p)&= \frac{1}{k} r_s(A^k(p)). 
\end{split}
\end{equation}
Indeed, it follows from Lemmas~\ref{lemma: subadditive} and~\ref{lemma: Lyap exp} together with Kingman's subadditive ergodic theorem that
\begin{equation*}
\begin{split}
\gamma_1(A,\mu_p)+\gamma_2(A,\mu_p)+\cdots +\gamma_s(A,\mu_p)&= \inf_n \frac{1}{n}\int \log V_s(A^n(x))d\mu_p \\
&= \lim_{n\to +\infty} \frac{1}{n} \log V_s(A^n(p)) \\
&= \lim_{n\to +\infty} \frac{1}{nk} \log V_s(A^{nk}(p)) \\
&= \frac{1}{k}\lim_{n\to +\infty} \frac{1}{n} \log V_s(A^{k}(p)^n) \\
&= \frac{1}{k} r_s(A^k(p)). 
\end{split}
\end{equation*}
We observe that Lemma~\ref{lemma: continuity mutlidim spectral radius} implies that the map  $A\to \frac{1}{k} r_s(A^k(p))$ is continuous and consequently, 
 the map  $A\to \log \hat{\rho}_s(A)$ is lower-semicontinuous which yields the conclusion of the theorem in the case when $s\in \mathbb N$.

Take now an arbitrary $s>0$.   Observe that
\begin{equation}\label{1119}
\varphi_c^s(T)=(\varphi_c^{\lfloor s\rfloor+1}(T))^{s-\lfloor s\rfloor}(\varphi_c^{\lfloor s\rfloor}(T))^{1-s+\lfloor s\rfloor},
\end{equation}
for any $T\in B(\B, \B)$.  By setting $V_s(T):=(V_{\lfloor s\rfloor+1}(T))^{s-\lfloor s\rfloor} (V_{\lfloor s\rfloor}(T))^{1-s+\lfloor s\rfloor}$, one can repeat the arguments in the proof of Lemma~\ref{lemma: log of joint spectral raidus} to show that
\[
\log \hat{\rho}_s(A)=\max_{\mu \in \mathcal{M}_f} \left\lbrace \inf_n \frac{1}{n}\int \log V_s(A^n(x))d\mu \right\rbrace  =
\max_{\mu \in \mathcal{M}_f} \left\lbrace \lim_n \frac{1}{n}\int \log \varphi^s_c(A^n(x))d\mu   \right\rbrace.
\] 
Arguing as in the case when $s\in \N$, we obtain that $A\mapsto \log \hat{\rho}_s(A)$ is upper-semicontinuous. 

On the other hand, it follows from~\eqref{1119} that
\begin{align*}
\log \hat{\rho}_s(A) &=\max_{\mu \in \mathcal{M}_f} \left\lbrace (s-\lfloor s\rfloor) \lim_n \frac{1}{n}\int \log \varphi^{\lfloor s\rfloor +1}_c(A^n(x))d\mu \right. \\
& \left. +(1-s+\lfloor s\rfloor)  \lim_n \frac{1}{n}\int \log \varphi^{\lfloor s\rfloor }_c(A^n(x))d\mu \right\rbrace \\
&\ge \max_{f^k(p)=p, k\in \N}\left\lbrace (s-\lfloor s\rfloor) \lim_n \frac{1}{n}\int \log \varphi^{\lfloor s\rfloor +1}_c(A^n(x))d\mu_p \right. \\
&  \left.   +(1-s+\lfloor s\rfloor)  \lim_n \frac{1}{n}\int \log \varphi^{\lfloor s\rfloor }_c(A^n(x))d\mu_p \right\rbrace.
\end{align*}

Thus,
\[
\log \hat{\rho}_s(A) \ge \max_{f^k(p)=p, k\in \N}\left\lbrace  \frac{s-\lfloor s\rfloor }{k}  r_{\lfloor s\rfloor+1}(A^k(p))+\frac{1-s+\lfloor s\rfloor }{k} r_{\lfloor s\rfloor}(A^k(p))\right\rbrace.
\]
By applying~\cite[Theorem 2.5]{BD} one can easily conclude that
\[
\log \hat{\rho}_s(A) \le \max_{f^k(p)=p, k\in \N}\left\lbrace  \frac{s-\lfloor s\rfloor }{k}  r_{\lfloor s\rfloor+1}(A^k(p))+\frac{1-s+\lfloor s\rfloor }{k} r_{\lfloor s\rfloor}(A^k(p))\right\rbrace,
\]
and therefore
\begin{equation}\label{1214}
\log \hat{\rho}_s(A) = \max_{f^k(p)=p, k\in \N}\left\lbrace  \frac{s-\lfloor s\rfloor }{k}  r_{\lfloor s\rfloor+1}(A^k(p))+\frac{1-s+\lfloor s\rfloor }{k} r_{\lfloor s\rfloor}(A^k(p))\right\rbrace.
\end{equation}
Arguing as in the case when $s\in \N$, we obtain that  the map  $A \mapsto \hat{\rho}_s(A)$ is  lower-semicontinuous. Hence, the conclusion of the theorem holds for arbitrary $s>0$.

\begin{remark}\label{816}
We note that it was not necessary to refer to~\cite{ZCL18} for the case when $d<\infty$.  Indeed, our arguments can be easily modified to cover the  finite-dimensional case also.  In fact, one only needs to modify slightly (actually simplify) the proof of Lemma~\ref{lemma: continuity mutlidim spectral radius}. Moreover, observe that the hypothesis that $f$ satisfies the Anosov Closing property was only used to apply the results from \cite{BD}.
\end{remark}

\subsection{Proof of Theorem \ref{theo: berger-wang} }
Let us again assume that $\B$ is infinite-dimensional (Remark~\ref{816} applies for the proof of this theorem also).
We start with two auxiliary lemmas.

\begin{lemma}
For any  $T\in B_0(\B,\B)$ and $j, n\in \mathbb N$, we have 
$$\xi_j(T^n)=n\xi_j(T).$$
\end{lemma}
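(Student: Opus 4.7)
The plan is to work directly from the definition of $\xi_j$ given in the proof of Lemma \ref{lemma: continuity mutlidim spectral radius}, namely
\[
\xi_j(T) = \lim_{m \to +\infty} \frac{1}{m} \log c_j(T^m),
\]
and to exploit the trivial semigroup identity $(T^n)^m = T^{nm}$. The existence of this limit for $T \in B_0(\B,\B)$ is not an issue, since it was already established in Lemma \ref{lemma: continuity mutlidim spectral radius} (via the identification $\xi_j(T) = \log|\lambda_k|$ for an appropriate eigenvalue $\lambda_k$ coming from the Riesz decomposition); in particular, $\xi_j(T^n)$ also exists because $T^n$ is again a compact operator on $\B$.

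First, I would substitute $T^n$ in place of $T$ in the definition:
\[
\xi_j(T^n) = \lim_{m \to +\infty} \frac{1}{m} \log c_j\bigl((T^n)^m\bigr) = \lim_{m \to +\infty} \frac{1}{m} \log c_j(T^{nm}).
\]
Then I would factor out $n$ from the $1/m$ term:
\[
\xi_j(T^n) = n \cdot \lim_{m \to +\infty} \frac{1}{nm} \log c_j(T^{nm}).
\]
Since the full limit $\lim_{k \to +\infty} \frac{1}{k} \log c_j(T^k) = \xi_j(T)$ exists in $[-\infty, +\infty)$, it must equal the limit along the subsequence $k = nm$, giving
\[
\xi_j(T^n) = n \cdot \xi_j(T),
\]
as required.

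There is no real obstacle here beyond bookkeeping: the entire content is that $\xi_j$ is defined as an $n$-th root type limit along powers and therefore behaves homogeneously under passage to powers. The only point to verify in passing is that $\xi_j(T^n)$ is itself well-defined, which is immediate because $T^n \in B_0(\B,\B)$ and so Lemma \ref{lemma: continuity mutlidim spectral radius} applies equally to $T^n$.
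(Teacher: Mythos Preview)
Your argument is correct and is genuinely more direct than the paper's own proof. The paper proceeds spectrally: it invokes the spectral mapping $\sigma(T^n)=\{\lambda_i^n\}$, checks that the algebraic multiplicities $\dim \mathcal N_{\lambda_i}$ are preserved under passage from $T$ to $T^n$, and then applies the identification~\eqref{803} (that is, $\xi_j(T)=\log|\lambda_k|$) to both $T$ and $T^n$ separately to read off $\xi_j(T^n)=\log|\lambda_k^n|=n\log|\lambda_k|=n\,\xi_j(T)$. You bypass all of this by using only that the limit $\lim_{k\to\infty}\frac{1}{k}\log c_j(T^k)$ exists (which indeed follows from the paper's own definition $\xi_j=r_j-r_{j-1}$ together with the submultiplicativity of $V_j$ and $V_{j-1}$ from Lemma~\ref{lemma: subadditive}) and then extracting the subsequence $k=nm$. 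What your route buys is that it avoids the Riesz decomposition and the bookkeeping of multiplicities entirely; what the paper's route buys is a conceptually transparent explanation of \emph{why} the identity holds in terms of eigenvalues, and it reuses machinery already set up in Lemma~\ref{lemma: continuity mutlidim spectral radius}.
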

\begin{proof} 
Using the same notation as in the proof of Lemma~\ref{lemma: continuity mutlidim spectral radius} one has that 
\[
\sigma (T^n)=\{\lambda_i^n: i\in \N\}.
\]
Furthermore, $\text{dim} N_{\lambda_i}=\text{dim} N_{\lambda_i^n}$. Hence, the conclusion of the lemma follows directly from~\eqref{803} (applied both for $T$ and $T^n$).
\end{proof}

\begin{lemma} \label{lemma: equality varphi and rho}
For any $s\in \N$ and  $T\in B_0(\B,\B)$,
$$\lim_{n\to +\infty}  \varphi_c^s(T^n)^{\frac{1}{n}} = \limsup_{n\to +\infty}\rho_s(T^n)^{\frac{1}{n}}. $$
\end{lemma}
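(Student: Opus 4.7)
The plan is to reduce both sides of the desired equality to the single quantity $\rho_s(T)$ by exploiting the relationship between $\varphi_c^s$ and $V_s$ coming from Lemma~\ref{lemma: relation volume growth}, together with the preceding lemma $\xi_j(T^n)=n\xi_j(T)$.

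First, I would observe that for $s\in\N$ and any $T\in B_0(\B,\B)$, Lemma~\ref{lemma: relation volume growth} gives a constant $C>0$ with
\[
\frac{1}{C} V_s(T^n) \le \varphi_c^s(T^n)=\prod_{j=1}^s c_j(T^n) \le C\, V_s(T^n)
\]
for every $n\in\N$. Taking $\tfrac{1}{n}\log$ and letting $n\to\infty$, the multiplicative constants wash out and one obtains
\[
\log \rho_s(T)=\lim_{n\to\infty}\frac{1}{n}\log \varphi_c^s(T^n)=\lim_{n\to\infty}\frac{1}{n}\log V_s(T^n)=r_s(T),
\]
where the middle limit exists by Lemma~\ref{lemma: subadditive} (subadditivity of $\log V_s$) and the existence of the outer limit is guaranteed by the definition of $\rho_s$. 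In particular, the left-hand side of the statement equals $\rho_s(T)$ by definition, and the identity $\rho_s(T)=e^{r_s(T)}$ will be applied next in the form $\rho_s(S)=\exp\!\bigl(\sum_{j=1}^s\xi_j(S)\bigr)$ for any $S\in B_0(\B,\B)$, using~\eqref{828}.

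Next I would compute $\rho_s(T^n)$ for each fixed $n$. Applying the identity just obtained to $S=T^n$ and then invoking the preceding lemma $\xi_j(T^n)=n\xi_j(T)$ yields
\[
\log \rho_s(T^n)=\sum_{j=1}^s \xi_j(T^n)=n\sum_{j=1}^s \xi_j(T)=n\,r_s(T)=n\log\rho_s(T),
\]
hence $\rho_s(T^n)^{1/n}=\rho_s(T)$ for every $n\in\N$. Consequently the $\limsup$ on the right-hand side is a constant sequence, whose value equals $\rho_s(T)$, and this matches the left-hand side.

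There is no real obstacle here; the only subtle point is making sure the definition of $\rho_s$ is compatible with the identification $\rho_s(T)=e^{r_s(T)}$, i.e.\ that the defining limit $\lim_n \varphi_c^s(T^n)^{1/n}$ indeed exists for compact $T$. This is exactly what the sandwich via $V_s(T^n)$ together with the submultiplicativity of $V_s$ (Lemma~\ref{lemma: subadditive}) and Fekete's lemma provides; once this is recorded, the proof reduces to the one-line computation $\rho_s(T^n)^{1/n}=\rho_s(T)$ using the previous lemma.
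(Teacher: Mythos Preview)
Your proposal is correct and follows essentially the same route as the paper: both arguments show that each side of the equality equals $e^{r_s(T)}=\rho_s(T)$, first identifying the left-hand side with $r_s(T)$ via the $V_s$--$\varphi_c^s$ comparison (Lemma~\ref{lemma: relation volume growth}), and then computing $\log\rho_s(T^n)=r_s(T^n)=nr_s(T)$ using~\eqref{828} and the preceding lemma $\xi_j(T^n)=n\xi_j(T)$. Your remark about justifying the existence of the defining limit for $\rho_s$ via Fekete's lemma is a welcome extra bit of care that the paper leaves implicit.
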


\begin{proof} 
Observe that 
\begin{displaymath}
\lim_{n\to +\infty} \frac{1}{n} \log \varphi_c^s(T^n)=\lim_{n\to +\infty} \frac{1}{n}\log V_s(T^n)=r_s(T).
\end{displaymath}
On the other hand,
\begin{displaymath}
\begin{split}
\frac{1}{n}\log \rho _s(T^n)&=\frac{1}{n} \log \lim_{m\to +\infty}V_s((T^n)^m)^{\frac{1}{m}}\\
&=\frac{1}{n} \lim_{m\to +\infty} \frac{1}{m}\log V_s((T^n)^m)\\
&= \frac{1}{n}r_s(T^n).
\end{split}
\end{displaymath}
Thus, since
$$r_s(T^n)=\sum_{j=1}^s\xi_j(T^n),$$
it follows from the previous lemma that for each $n\in \N$, 
$$r_s(T^n)=n\sum_{j=1}^s\xi_j(T)=nr_s(T).$$
This completes the proof of the lemma. 
\end{proof}
 We start observing that from Lemma \ref{lemma: relation volume growth} and the submultiplicativity of $V_s$ (see Lemma \ref{lemma: subadditive}) we get that for any $T\in B(\B,\B)$ and $s\in \N$,
\begin{displaymath}
\begin{split}
\rho_s(T)&=\lim_{n\to +\infty} \left( c_1(T^n)c_2(T^n)\ldots c_s(T^n)\right)^{\frac{1}{n}} \\
&=\lim_{n\to +\infty} V_s(T^n)^{\frac{1}{n}}=\inf_n V_s(T^n)^{\frac{1}{n}} \\ 
& \leq V_s(T).
\end{split}
\end{displaymath}
Take now any $s>0$.  It follows easily from~\eqref{1119} that
\[
\rho_s(T)=\rho_{\lfloor s\rfloor+1}(T)^{s-\lfloor s\rfloor} \rho_{\lfloor s\rfloor}(T)^{1-s+\lfloor s\rfloor}, \quad \text{for  every $T\in B(\B, \B)$.}
\]
Using~Lemma~\ref{lemma: relation volume growth} and the previous observation, we have that
\[
\begin{split}
\overline{\rho}_s(A) &=\limsup_{n\to +\infty} \left( \sup_{x\in M} \rho_s(A^n(x))\right)^{\frac{1}{n}} \\
&=\limsup_{n\to +\infty} \left( \sup_{x\in M} \rho_{\lfloor s\rfloor+1}(A^n(x))^{s-\lfloor s\rfloor} \rho_{\lfloor s\rfloor}(A^n(x))^{1-s+\lfloor s\rfloor}\right)^{\frac{1}{n}} \\
&\le \limsup_{n\to +\infty} \left( \sup_{x\in M} V_{\lfloor s\rfloor +1}(A^n(x))^{s-\lfloor s\rfloor} V_{\lfloor s\rfloor}(A^n(x))^{1-s+\lfloor s\rfloor}\right)^{\frac{1}{n}} \\
&=  \limsup_{n\to +\infty} \left( \sup_{x\in M} \varphi_c^{\lfloor s\rfloor+1}(A^n(x))^{s-\lfloor s\rfloor} \varphi_c^{\lfloor s\rfloor}(A^n(x))^{1-s+\lfloor s\rfloor}\right)^{\frac{1}{n}} \\
&=\limsup_{n\to +\infty} \left( \sup_{x\in M} \varphi_c^{s}(A^n(x))\right)^{\frac{1}{n}} \\
&=\limsup_{n\to +\infty} \sup_{x\in M}\varphi_c^{s}(A^n(x))^{\frac 1 n},
\end{split}
\]
and therefore
\begin{equation}\label{211}
\overline{\rho}_s(A)  \le \hat{\rho}_s(A).
\end{equation}

Let us now establish the converse inequality.  Take an arbitrary $p\in \text{Fix}(f^k)$, where $k\in \N$. By  Lemma~\ref{lemma: equality varphi and rho}, we have that
\[
\begin{split}
\log \overline{\rho}_s(A)&=\limsup_{n\to +\infty} \sup_{x\in M} \log  \rho_s(A^n(x))^{\frac{1}{n}} \\
&\ge \limsup_{n\to +\infty}  \log  \rho_s(A^n(p))^{\frac{1}{n}} \\
&\ge \limsup_{n\to +\infty}  \log  \rho_s(A^k(p)^n)^{\frac{1}{kn}} \\
&=\frac{s-\lfloor s\rfloor}{k} \limsup_{n\to +\infty}  \log  \rho_{\lfloor s\rfloor+1} (A^k(p)^n)^{\frac{1}{n}} \\
&\phantom{=}+\frac{1-s+\lfloor s\rfloor}{k} \limsup_{n\to +\infty}  \log  \rho_{\lfloor s\rfloor} (A^k(p)^n)^{\frac{1}{n}} \\
&=\frac{s-\lfloor s\rfloor}{k} \limsup_{n\to +\infty}  \log  \varphi_c^{\lfloor s\rfloor+1} (A^k(p)^n)^{\frac{1}{n}} \\
&\phantom{=}+\frac{1-s+\lfloor s\rfloor}{k} \limsup_{n\to +\infty}  \log  \varphi_c^{\lfloor s\rfloor} (A^k(p)^n)^{\frac{1}{n}} \\
&=\frac{s-\lfloor s\rfloor}{k} r_{\lfloor s\rfloor+1}(A^k(p))+\frac{1-s+\lfloor s\rfloor}{k}r_{\lfloor s\rfloor}(A^k(p)).
\end{split}
\]
Hence, \eqref{1214} implies that $\log \overline{\rho}_s(A) \ge \log \hat{\rho}_s(A)$. Therefore, 
\[
\overline{\rho}_s(A) \ge \hat{\rho}_s(A),
\]
which together with~\eqref{211} yields the conclusion of the theorem. \qed

As a consequence of our proofs, we also get the following ``more dynamical" result.

\begin{corollary}
Let $f$ and $A$ be as before. Then,
\begin{displaymath}
\hat{\rho}_s(A)=\overline{\rho}_s(A)= \limsup_n \sup_{k\geq 1} \sup_{p\in \text{Fix}(f^k)} \rho_s (A(f^{nk-1}(p))\cdots A(p))^{\frac{1}{nk}}.
\end{displaymath}
\end{corollary}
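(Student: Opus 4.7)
The plan is to combine Theorem~\ref{theo: berger-wang} with equation~\eqref{1214} from the proof of Theorem~\ref{theo: continuity}. Theorem~\ref{theo: berger-wang} already gives $\hat{\rho}_s(A)=\overline{\rho}_s(A)$, so the task reduces to identifying this common value with the limsup on the right-hand side.

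First I would trivialize the limsup. For $p\in \text{Fix}(f^k)$ the cocycle is periodic of period $k$ along the orbit of $p$, whence
\[
A(f^{nk-1}(p))\cdots A(p)=A^{nk}(p)=\bigl(A^k(p)\bigr)^n.
\]
A rearrangement of the defining limit $\rho_s(T)=\lim_m \varphi_c^s(T^m)^{1/m}$ (which exists by the submultiplicativity of $V_s$ together with Lemma~\ref{lemma: relation volume growth}) shows that $\rho_s(T^n)=\rho_s(T)^n$ for every $n\in \N$. Consequently $\rho_s(A^{nk}(p))^{1/(nk)}=\rho_s(A^k(p))^{1/k}$ is independent of $n$, so the limsup in the statement collapses to $\sup_{k\ge 1}\sup_{p\in \text{Fix}(f^k)} \rho_s(A^k(p))^{1/k}$.

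Next I would rewrite the right-hand side of~\eqref{1214} in terms of $\rho_s$. Taking $1/n$-th powers in the factorization~\eqref{1119}, passing to $n\to\infty$, and using the integer-$s$ identity $\log \rho_j(T)=r_j(T)$ (implicit in the proof of Lemma~\ref{lemma: equality varphi and rho}), one obtains
\[
\log \rho_s(T)=(s-\lfloor s\rfloor)\, r_{\lfloor s\rfloor+1}(T)+(1-s+\lfloor s\rfloor)\, r_{\lfloor s\rfloor}(T),
\]
for every $T\in B_0(\B,\B)$. Inserting $T=A^k(p)$ into~\eqref{1214} then yields
\[
\log \hat{\rho}_s(A)=\max_{f^k(p)=p,\,k\in \N}\tfrac{1}{k}\log \rho_s(A^k(p)),
\]
which is precisely the logarithm of the quantity identified in the previous paragraph, completing the proof.

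No significant obstacle is expected: the whole argument is essentially a repackaging of facts already established in the proofs of Theorems~\ref{theo: berger-wang} and~\ref{theo: continuity}. The only computation outside the excerpt is the short derivation of the $\log \rho_s(T)$ formula from~\eqref{1119} and the integer-$s$ case, which follows immediately from the definitions.
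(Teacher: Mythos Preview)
Your proposal is correct and is essentially the argument the paper has in mind when it says the corollary follows ``as a consequence of our proofs'': you invoke Theorem~\ref{theo: berger-wang} for the first equality and then unwind~\eqref{1214} together with the periodic-orbit identity $A^{nk}(p)=(A^k(p))^n$ for the second. The only stylistic difference is that where the paper (in the proof of Theorem~\ref{theo: berger-wang}) passes through Lemma~\ref{lemma: equality varphi and rho} to relate $\limsup_n\rho_j(T^n)^{1/n}$ and $r_j(T)$, you instead observe directly that $\rho_s(T^n)=\rho_s(T)^n$ via Fekete's lemma, which is a slight shortcut but leads to the same conclusion.
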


\medskip{\bf Acknowledgements.} 
We thank to the referee for useful comments on the first version of this work. L.B. was partially supported by a CNPq-Brazil PQ fellowship under Grant No. 306484/2018-8. D.D. was supported in part by Croatian Science Foundation under the project IP-2019-04-1239 and by the University of Rijeka under the projects uniri-prirod-18-9 and uniri-prprirod-19-16.


\begin{thebibliography}{777}


\bibitem[Bac18]{Bac18}
L.~Backes, \emph{Periodic approximation of Oseledets subspaces for semi-invertible cocycles}, Dynamical Systems \textbf{33} (2018), 480--496. 

\bibitem[BD19]{BD}
L.~Backes and D.~Dragi\v cevi\' c, \emph{Periodic approximation of Lyapunov exponents for semi-invertible cocycles}, Annales Academiae Scientiarum Fennicae \textbf{44} (2019), 183--209. 

\bibitem[BW92]{BW} M. A. Berger and Y. Wang, \emph{Bounded semigroups of matrices}, Linear Algebra Appl., \textbf{166} (1992), 21--27.

\bibitem[AB16]{AB} A. Blumenthal, \emph{A volume-based approach to the multiplicative ergodic theorem on Banach spaces}, Discrete Contin. Dyn. Syst. \textbf{36} (2016), 2377--2403.

\bibitem[BM19]{BM} A.~Blumenthal and I.~Morris, \emph{Characterization of dominated splittings for operator cocycles acting on Banach spaces}, Journal of Differential Equations \textbf{267} (2019), 3977--4013.


\bibitem[BM18]{BM18}

J.~Bochi and I.~Morris, \emph{Equilibrium states of generalised singular value potentials and applications to affine iterated function systems}, Geom. Funct. Anal. \textbf{28} (2018), 995--1028.


\bibitem[CM79]{CM79} J.~Conway and B.~Morrel, \emph{Operators that are points of spectral continuity}, Integral Equations and Operator Theory, \textbf{2} (1979), 174--198.

\bibitem[Dai12]{Dai12} X. Dai, \emph{A Gel'fand-type spectral-radius formula and stability of linear constrained switching
systems}, Linear Algebra Appl. \textbf{436}  (2012), 1099--1113.

\bibitem[Dai14]{Dai14} X. Dai, \emph{Robust periodic stability implies uniform exponential stability of Markovian jump
linear systems and random linear ordinary differential equations}, J. Franklin Inst. \textbf{351} (2014),
2910--2937.

\bibitem[DHH17]{DHH17}
X. Dai, T. Huang and Y. Huang,\emph{ Exponential stability of matrix-valued Markov chains via
nonignorable periodic data}, Trans. Amer. Math. Soc. \textbf{369} (2017), 5271--5292.


\bibitem[DL92i]{DL92i}
I. Daubechies and J. Lagarias, \emph{Sets of matrices all infinite products of which converge}, Linear Algebra Appl. \textbf{161} (1992) 227--263. 

\bibitem[DL92ii]{DL92ii}
I. Daubechies and J. Lagarias, \emph{Two-scale difference equations. II. Local regularity, infinite products of matrices and fractals}, SIAM J. Math. Anal. \textbf{23}  (1992), 1031--1079.

\bibitem[Deg08]{Deg08}
G.~Degla, \emph{An overview of semi-continuity results on the spectral radius and positivity}, J. Math. Anal. Appl. \textbf{338} (2008), 101--110.


\bibitem[DFGTV18]{DFGTV}
D. Dragi\v cevi\' c, G. Froyland, Cecilia Gonzalez-Tokman and S. Vaienti, \emph{A spectral approach for quenched limit theorems for
random expanding dynamical systems}, Comm. Math. Phys. \textbf{360} (2018), 1121--1187. 


\bibitem[F88]{F88}
K. Falconer. \emph{The Hausdorff dimension of self-affine fractals}, Math. Proc. Camb. Philos. Soc., \textbf{103} (1988), 339--350.

\bibitem[F94]{F94}
K. Falconer. \emph{Bounded distortion and dimension for nonconformal repellers}, Math.Proc. Camb. Philos. Soc. \textbf{115} (1994), 315--334.

\bibitem[FLQ13]{FLQ13}
G.~Froyland, S.~LLoyd, and A.~Quas, \emph{A semi-invertible Oseledets Theorem with applications to transfer operator cocycles}, Discrete and Continuous Dynamical Systems, \textbf{33} (2013), 3835--3860.


\bibitem[GTQ15]{GTQ} C.  Gonz\'alez-Tokman and A. Quas, \emph{A concise proof of the multiplicative ergodic theorem on Banach spaces}, Journal of Modern Dynamics, \textbf{9} (2015), 237--255.

\bibitem[Gu95]{Gu95} L. Gurvits, \emph{Stability of discrete linear inclusion}, Linear Algebra Appl. \textbf{231} (1995), 47--85.


\bibitem[Kal11]{Kal11}
B. Kalinin, \emph{Liv\v sic theorem for matrix cocycles}, Annals of Mathematics, \textbf{173} (2011),1025--1042.

\bibitem[KH95]{KH95} 
A.~Katok and B.~Hasselblatt, \emph{Introduction to the modern theory of dynamical systems}, Cambridge University Press, London-New York, 1995.

\bibitem[K10]{K}
V. Kozyakin, \emph{ An explicit Lipschitz constant for the joint spectral radius}, Linear Algebra Appl.
\textbf{433} (2010), 12--18.

\bibitem[MOS01]{MOS}
B. E. Moision, A. Orlitsky and P. H. Siegel, \emph{ On codes that avoid specified  differences}, IEEE
Trans. Inform. Theory \textbf{47} (2001), 433--442.

\bibitem[IM12]{IM} 
I. D.  Morris, \emph{The generalized Berger-Wang formula and the spectral radius of linear cocycles}, J. Funct. Anal. \textbf{262} (2012), 811--824.


\bibitem[IM13]{IM13} 
I. D.  Morris, \emph{Mather sets for sequences of matrices and applications to the study of joint spectral radii}, Proc. London Math. Soc. (3) \textbf{107} (2013), 121--150.

\bibitem[ST00]{ST00} V. S. Shulman and Y. V. Turovski\u{i}, \emph{Joint spectral radius, operator semigroups, and
a problem of W. Wojty\'nski}, J. Funct. Anal. \textbf{177} (2000),  383--441.

\bibitem[ST02]{ST02}
V. S. Shulman and Y. V. Turovski\u{i}, \emph{Formulae for joint spectral radii of sets of operators}, Studia Math. \textbf{149} (2002), 23--37.

\bibitem[RR00]{RR00} 
H.~Radjavi and P.~Rosenthal, \emph{Simultaneous Triangularization}, Springer New York, New York, 2000.

\bibitem[RS60]{RS}
G.-C. Rota and G. Strang, \emph{A note on the joint spectral radius}, Indag. Math. \textbf{22}(1960),
379--381.


\bibitem[S84]{S84}
V. Shulman, \emph{Invariant subspaces of Volterra operators}, Funktsional. Anal. i Prilozhen. \textbf{18} (1984), 85--86. 


\bibitem[ST00]{ST00}
V. Shulman and Y. Turovskii, \emph{Joint spectral radius, operator semigroups, and a problem of W. Wojty\'{n}ski}, J. Funct. Anal. \textbf{177} (2000), 383--441. 

\bibitem[WY13]{WY13}
Y.Wang and J. You, \emph{Examples of discontinuity of Lyapunov exponent in smooth quasiperiodic
cocycles}, Duke Math. J.  \textbf{162} (2013), 2363--2412.

\bibitem[W02]{W} F. Wirth, \emph{The generalized spectral radius and extremal norms}, Linear Algebra Appl. \textbf{342}
(2002), 17--40.

\bibitem[ZCL18]{ZCL18}
R.~Zou, Y.~Cao and G.~Liao, \emph{Continuity of spectral radius over hyperbolic systems}, Discrete Contin. Dyn. Syst.  \textbf{38} (2018), 3977--3991.


\end{thebibliography}
\end{document}